\documentclass{article}
\usepackage[T1]{fontenc}
\usepackage[a4paper,margin=1in]{geometry}
\usepackage{amsmath,amssymb,amsthm,mathtools,mathrsfs}
\usepackage{comment}
\usepackage{booktabs}
\usepackage{tabularx}
\usepackage{xcolor}
\usepackage{booktabs}
\usepackage{enumitem}
\usepackage[hidelinks]{hyperref}
\usepackage{microtype}
\usepackage{amsthm}
\usepackage{cite}
\usepackage[hidelinks]{hyperref}
 
\allowdisplaybreaks
\numberwithin{equation}{section}

\newtheorem{theorem}{Theorem}[section]
\newtheorem{proposition}[theorem]{Proposition}
\newtheorem{lemma}[theorem]{Lemma}

\theoremstyle{definition}
\newtheorem{assumption}[theorem]{Assumption}
\theoremstyle{remark}
\newtheorem{remark}[theorem]{Remark}

\newcommand{\dd}{\,\mathrm{d}}

\title{Regularity  Effects   on the Small-Diffusion Asymptotics of the Principal Eigenvalue: Non-convergence and Convergence}
\author{
Kexin Zhang\\
School of Mathematics, Sun Yat-sen University\\
Guangzhou 510275, China\\
\texttt{kxzmath@163.com}
}

\date{}

\begin{document}
\maketitle

\begin{abstract}
We study the asymptotic behavior as \(d\to0^+\) of the principal eigenvalue \(\lambda(d)\) of the one-dimensional elliptic problem
\[
\begin{cases}
-d\phi''(x)-2\alpha m'(x)\phi'(x)+c(x)\phi(x)
=\lambda(d)\phi(x), & x\in(a,b),\\
\phi'(a)=\phi'(b)=0,
\end{cases}
\]
where \(\alpha>0\) is fixed, \(m\in C^{1,1}([a,b])\), and \(c\in C([a,b])\). We assume that \(m'=0\)  only at \(x_0\in(a,b)\), where \(m\) attains a strict minimum. Within this class, we construct a counterexample $m\in C^{1,1}$ for which \(\lambda(d)\) has two distinct subsequential limits as \(d\to0^+\), leading to the non-convergence of $\lambda(d)$. We then consider another class of $C^{1,1}$ advection
functions for which $m'(x)/(x-x_0)$ has positive
one-sided limits at $x_0$.
For this class, we prove that $\lambda(d)$ converges
as $d\to0^+$. Thus, \(C^{1,1}\) regularity alone does not ensure convergence of the principal eigenvalue, even when \(m\) has exactly one strict interior  critical minimal point.
\end{abstract}

\section{Introduction}
\label{sec:introduction}
In this paper, we consider the principal eigenvalue problem
\begin{equation}
\label{eq:original-eigenproblem}
\begin{cases}
-d\phi''(x)-2\alpha m'(x)\phi'(x)+c(x)\phi(x)=\lambda(d)\phi(x),
&x\in  (a,b),\\[1mm]
\phi'(a)=\phi'(b)=0.
\end{cases}
\end{equation}
Here, $ (a,b)$ is a bounded interval, $d>0$ is the diffusion coefficient,
and $\alpha>0$ is a fixed advection coefficient. We assume that the advection term $m\in C^{1,1}([a,b])$ and linear term $c\in C([a,b])$.
We denote the principal eigenvalue by $\lambda(d)$. In this paper, we focus on the asymptotic behavior of the principal eigenvalue $\lambda(d)$ when $d$ tends to 0, especially the convergence and non-convergence properties.

It is well known that for given $d,\alpha>0,$ and functions $m(x)$ and $c(x)$, there exists a principal eigenvalue $\lambda(d)$ of the problem \eqref{eq:original-eigenproblem}. Moreover, $\lambda(d)$ has an associated principal eigenfunction, which can be chosen positive.

\subsection{Existing results}
Principal eigenvalue of the second-order elliptic operator plays an important
role in the qualitative analysis of elliptic and parabolic equations \cite{CL,PZ,LLZ,Bcmp, W75,PZhao,N10,N09,XZ26,XZ26sd,LLPZx,LLPZd,HNR2011,CCL6,book}. 
The dependence of the principal eigenvalue on the diffusion and advection coefficients has been extensively studied.

The large-advection limit $\alpha\to\infty$, with $d>0$ fixed, has also been extensively studied. For $m\in C^2$, Chen and Lou \cite{CL} established convergence under non-degeneracy assumptions on the critical points of \(m\). Peng and Zhou \cite{PZ} further studied the effects of degenerate advection and boundary conditions in one dimension. 
More recently, Bai et al. and the present author \cite{bwxzz} constructed examples
for which the principal eigenvalue does not converge as the
advection coefficient tends to infinity. In their construction,
$m$ is constant on a nontrivial interval and has infinitely
many local extrema outside this interval, accumulating at
its endpoints.

We now turn to the small-diffusion limit, which is the focus
of this paper. We consider $m\in C^{1,1}([a,b])$ whose only
critical point is a strict interior minimum $x_0\in(a,b)$, with
\[
m'(x)<0\quad\text{on }[a,x_0),\qquad
m'(x)>0\quad\text{on }(x_0,b].
\]
To place our results in context, we recall below the existing
results relevant to this configuration. The cited works
themselves allow more general critical point structures.

For the Dirichlet problem with \(c\equiv0\), Friedman and his
collaborators \cite{FA7273,DEF73,DF78} considered H\"older continuous drifts $m'$ satisfying
\begin{align}\label{grownth}
    C^{-1}|x-x_0|^\gamma\le |m'(x)|\le C|x-x_0|^\gamma,
\quad 0<\gamma\le1.
\end{align}
Their estimates imply that \(\lambda_{\mathrm D}(d)\to+\infty\)
when \(0<\gamma<1\), while for \(\gamma=1\) they give only
boundedness of \(\lambda_{\mathrm D}(d)\) as \(d\to0\).
We note that the non-degenerate \(C^2\) case \(m''(x_0)>0\)
is a subclass of this \(\gamma=1\) condition, whereas a general
\(C^{1,1}\) function need not satisfy the lower bound above.

Under Neumann boundary conditions, Chen and Lou \cite{CL12}
obtained the precise small-diffusion limit when \(m\in C^2\)
and its critical points are non-degenerate. For a unique interior
minimum \(x_0\), their local contribution is
\[
c(x_0)+2\alpha m''(x_0).
\]
For Robin boundary conditions with \(m\in C^2\), Peng, Zhang
and Zhou \cite{PZZ19} studied the effects of the boundary
condition and the Robin coefficient on the asymptotic behavior
of the principal eigenvalue. More recently, for the Dirichlet problem, Guo, Lou and Zhang
\cite{Guo25} investigated refined small-diffusion asymptotics
of the principal eigenpair under \(m\in C^2\) and structural
assumptions allowing degeneracies.

The small-diffusion results reviewed above for Neumann and
Robin boundary conditions assume $m\in C^2$. For the
Dirichlet problem, the lower-regularity estimates for the principal eigenvalue of
Friedman et al. give only boundedness under the   condition \eqref{grownth} with $\gamma=1$. Meanwhile, the
large-advection counterexample in \cite{bwxzz} involves
infinitely many local extrema accumulating at the boundary
of a constant region.

In this paper, we construct $m\in C^{1,1}([a,b])$ and
$c\in C([a,b])$ such that the principal eigenvalue of the
Neumann problem does not converge as $d\to0^+$, even though
$m$ has exactly one critical point, a strict interior minimum.

\subsection{Main results}
\label{sec:main-results}
Before stating our main result,  we first
 define for $q_-,q_+>0$:
\[
V(y)=
\begin{cases}
\alpha^2q_-^2y^2+\alpha q_-,&y<0,\\[1mm]
\alpha^2q_+^2y^2+\alpha q_+,&y>0.
\end{cases}
\]
and
\begin{align}\label{def:mu}
    \mu_\alpha(q_-,q_+)
=
\inf_{u\in\mathcal Q\setminus\{0\}}
\frac{\int_{\mathbb R}
\bigl(|u'|^2+V(y)u^2\bigr)\,dy}
{\int_{\mathbb R}u^2\,dy},
\end{align}
where
$\mathcal Q=\{u\in H^1(\mathbb R):yu\in L^2(\mathbb R)\}$. 
We impose the following assumption on $m$.

\medskip
\noindent\textbf{(A)}
The function $m$ has a unique interior critical point
$x_0\in(a,b)$, which is a strict local minimum, and satisfies
$m'(a)<0<m'(b)$.
\medskip

Our main result can be organized as follows:
\begin{theorem}\label{main}
   Fix $\alpha>0$. Consider the   function
$m\in C^{1,1}([a,b])$  satisfying  \textbf{\textup{(A)}}. Then the following statements hold for
problem \eqref{eq:original-eigenproblem}.
    \par
    (1). For any $c\in C([a,b])$, and $m$ moreover satisfies 
    \begin{equation}\label{asss}
        \lim_{x\to x_0^-}\frac{m'(x)}{x-x_0}=q_->0,\quad \lim_{x\to x_0^+}\frac{m'(x)}{x-x_0}=q_+>0,
    \end{equation}
    then \begin{equation}\label{eq:main-limit}
\begin{aligned}
\lim_{d\to0^+}\lambda(d)
=\min \{&
c(a),\,c(b),\,
c(x_0)+\mu_\alpha(q_-,q_+)\}.
\end{aligned}
\end{equation}

(2). There exist a counterexample function $m\in C^{1,1}([a,b])$  satisfying  \textbf{\textup{(A)}} and a function
$c\in C([a,b])$ such that the corresponding principal
eigenvalue $\lambda(d)$ does not converge as $d\to0^+$.
\end{theorem}
This theorem shows that both convergence and non-convergence can occur as $d\to0^+$, even for $C^{1,1}$ drift functions whose only critical point is a strict interior minimum.

The counterexample in part~(2) preserves the $C^{1,1}$
regularity of $m$ and its unique interior critical point,
but has different rescaled local profiles near $x_0$.
For such conterexample, there exist two sequences
$d_n,\tilde d_n\to0^+$ (as $n\to \infty$) such that
$$
\lim_{n\to \infty}\lambda(d_n)\neq \lim_{n\to \infty}
\lambda(\tilde d_n).
$$
Thus, different spatial scales near the single minimum
can produce distinct  limits of the
principal eigenvalue, as $d\to 0$.

Part (1) complements this example by providing a sufficient
condition for convergence in terms of the one-sided
linearizations of $m'$ at $x_0$.
When  $q_-=q_+=q>0$, Proposition~\ref{prop:equal-curvature}
gives $\mu_\alpha(q,q)=2\alpha q$. It is the same as the results for   $m\in C^2$, see \cite{CL12,PZZ19,Guo25}.
For $q_-\neq q_+$,
Proposition~\ref{prop:positive-bounds} yields
$$
2\alpha\min\{q_-,q_+\}
<
\mu_\alpha(q_-,q_+)
<
2\alpha\max\{q_-,q_+\}.
$$

\begin{remark}
The same arguments yield an analogous limit formula
when $m$ has finitely many interior critical points,
all of which are strict local minima or maxima,
provided that $m'(a)<0<m'(b)$ and \eqref{asss} holds at every
local minimum. The limit is obtained by taking the
minimum of the contributions from all interior extrema
and the endpoints. We do not pursue this extension here.
\end{remark}

The main ideas of the proof are as follows.
For part (1), we obtain the limit formula by combining
a local rescaling near $x_0$ with variational upper and
lower bounds and the contributions from the endpoints.

For part (2), we alternate the quadratic profiles $x^2$
and $2x^2$, which have different second derivatives at $0$,
in nested neighborhoods of the origin.
We use cubic connection functions to ensure that the limiting
function  belongs to $C^{1,1}$ and has exactly one critical
point. We also choose $c$ appropriately to exclude
endpoint contributions. We show that  
principal eigenvalue associated with this limit function has two distinct
subsequential limits as $d\to0^+$, by applying a continuity denpendence lemma, see Lemma \ref{cdm}.
The construction is illustrated schematically in
Figure \ref{fig:m123_even}.
\begin{figure}[htb]
	\centering
	\includegraphics[width=\linewidth]{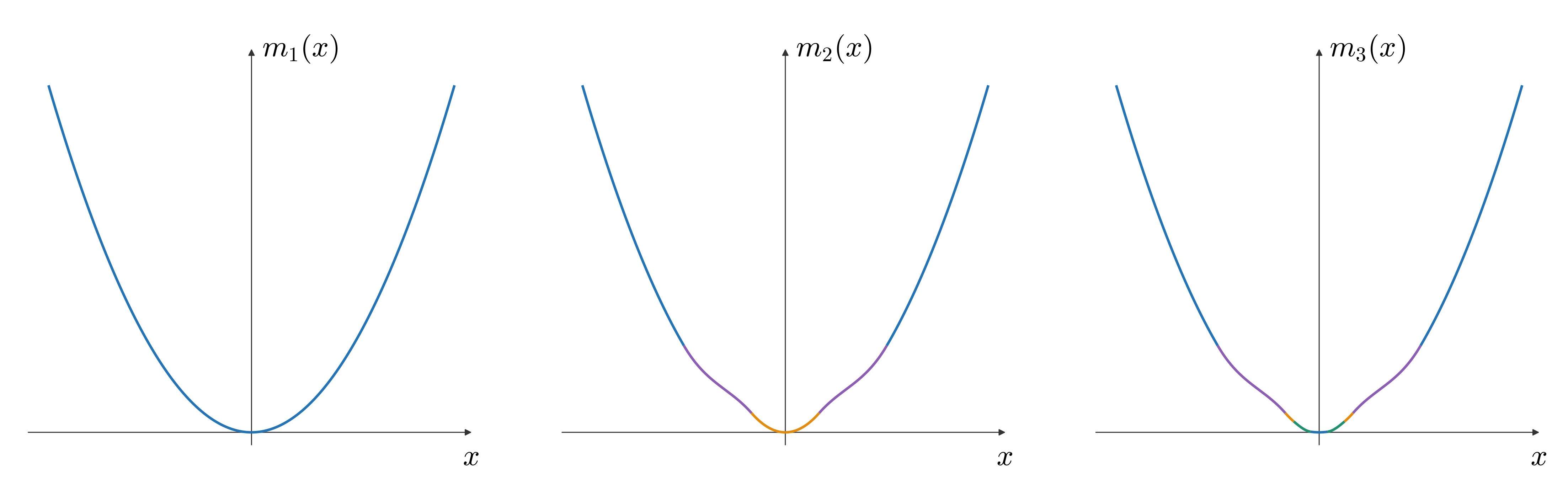}
     \caption{The construction of $m_1,m_2,m_3$}
    \label{fig:m123_even}
    \smallskip
    {\footnotesize 
    The blue line represents the funtcion $x^2$. The orange line represents the funtcion $2x^2$. The purple line and the green line represent the two connection functions, \(P_1(x)\) and \(P_2(x)\), respectively.}
\end{figure}

The remainder of this paper is organized as follows.
In Section~2, we present the variational characterizations
of $\lambda(d)$ and establish some basic properties of
$\mu_\alpha(q_-,q_+)$.
In Section~3, we prove the convergence result in
part (1) of Theorem \ref{main} by deriving matching upper
and lower bounds.
Section~4 is devoted to the construction of the
counterexample and the proof of the non-convergence result
in part (2) of Theorem \ref{main}.
Finally, Appendix \ref{app} establishes the existence of
a positive principal eigenfunction   of
the corresponding eigenvalue for the auxiliary problem
on $\mathbb R$.

\section{Preliminaries}
In this section, we give some preliminaries for preparation.
\subsection{The variational characterization of $\lambda(d)$}
The principal eigenvalue $\lambda(d)$ of the principal eigenvalue problem \eqref{eq:original-eigenproblem} admits the variational
characterization
\begin{align}\label{cc}
    \lambda(d)
=
\min_{\substack{\phi\in H^1(a,b)\\ \phi\not\equiv0}}
\frac{
\int_a^b e^{\frac{2\alpha m(x)}d}
\left(d|\phi'(x)|^2+c(x)\phi^2(x)\right)\,dx
}{
\int_a^b e^{\frac{2\alpha m(x)}d}\phi^2(x)\,dx
}.
\end{align}
We introduce the transformation
$w(x):=e^{\frac{\alpha m(x)}d}\phi(x).$
It follows that
\begin{equation}
\label{eq:variational-characterization}
\lambda(d)
=
\min_{\substack{w\in H^1(a,b)\\w\not\equiv0}}
\frac{
\int_a^b
\left[
d\left|
w'-\frac{\alpha}{d}m'w
\right|^2
+cw^2
\right]dx
}{
\int_a^b w^2\,dx
}.
\end{equation}
We will use these two kind of variational characterizations of $\lambda(d)$ for multiple times.

\subsection{Properties of $\mu_\alpha(q_-,q_+)$}

 We state that the constant
$\mu_\alpha(q_-,q_+)$ is attained and is the principal
eigenvalue of the problem
\begin{equation}\label{eq:eigenvalue-problem}
\left\{
\begin{aligned}
&-u''(y)+V(y)u(y)=\mu u(y)
&&\text{for a.e. }y\in\mathbb R,\\
&0<u\in\mathcal Q\cap H^2_{\mathrm{loc}}(\mathbb R).
\end{aligned}
\right.
\end{equation}
By the weighted decay estimate
Lemma \ref{lemwd}
and integration by parts separately on $(-\infty,0)$ and
$(0,\infty)$, we obtain
\begin{align*}
\int_{\mathbb R}\left(|u'|^2+V(y)u^2\right)\,dy
=
\int_{-\infty}^0
\left|u'-\alpha q_-yu\right|^2\,dy+
\int_0^\infty
\left|u'-\alpha q_+yu\right|^2\,dy.
\end{align*}
Therefore, $\mu_\alpha(q_-,q_+)$ admits the equivalent
variational characterization
\begin{align}\label{xx}
    \mu_\alpha(q_-,q_+)
=
\inf_{\substack{u\in\mathcal Q\\u\neq0}}
\frac{
\int_{-\infty}^0
\left|u'-\alpha q_-yu\right|^2\,dy
+
\int_0^\infty
\left|u'-\alpha q_+yu\right|^2\,dy
}{
\int_{\mathbb R}u^2\,dy
}.
\end{align}
The detailed  proof can be found in Appendix \ref{app}.

We present several properties of $\mu_\alpha(q_-,q_+)$ in the following.  

\begin{proposition}\label{prop:equal-curvature}
If $q_-=q_+=q>0$, then
$\mu_\alpha(q,q)=2\alpha q.$
\end{proposition}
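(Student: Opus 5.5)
When $q_-=q_+=q$, the potential becomes the single expression $V(y)=\alpha^2q^2y^2+\alpha q$ on all of $\mathbb R$. The operator $-u''+V u$ is then a harmonic oscillator shifted by the constant $\alpha q$, and the plan is to show that its bottom of spectrum is $\alpha q+\alpha q=2\alpha q$. I will prove matching bounds, using the factorized form \eqref{xx}.

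For the lower bound, note that with $q_-=q_+=q$ the numerator in \eqref{xx} is
\[
\int_{\mathbb R}|u'-\alpha q y u|^2\,dy,
\]
and this is nonnegative. That alone gives only $\mu_\alpha\ge 0$, so I will not rely on it. Instead I will work directly with \eqref{def:mu}. For $u\in\mathcal Q$ (first assume $u\in C_c^\infty$, then argue by density in the norm $\|u'\|_2+\|yu\|_2+\|u\|_2$), expand
\[
0\le\int_{\mathbb R}|u'+\alpha q y u|^2\,dy=\int_{\mathbb R}\bigl(|u'|^2+\alpha^2q^2y^2u^2\bigr)\,dy+\alpha q\int_{\mathbb R} y\,(u^2)'\,dy .
\]
Integrating by parts, the last term equals $-\alpha q\int u^2$. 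Hence
\[
\int_{\mathbb R}\bigl(|u'|^2+\alpha^2q^2y^2u^2\bigr)\,dy\ge \alpha q\int_{\mathbb R} u^2\,dy,
\]
and adding $\alpha q\int u^2$ to both sides yields a Rayleigh quotient of at least $2\alpha q$. Equivalently, this is the Heisenberg inequality $\|u'\|\,\|yu\|\ge\frac12\|u\|^2$ combined with AM--GM.

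For the upper bound, I will plug in the Gaussian $u(y)=e^{-\alpha q y^2/2}$, which lies in $\mathcal Q$. It satisfies $u'+\alpha q y u=0$, so equality holds in the computation above and the quotient is exactly $2\alpha q$. As a cross-check, it solves $-u''+Vu=2\alpha q\,u$, since $u''=(\alpha^2q^2y^2-\alpha q)u$.

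The only delicate point is justifying the integration by parts for general $u\in\mathcal Q$. I will handle it either by density of $C_c^\infty$ in $\mathcal Q$ with respect to the form norm, or by invoking the weighted decay estimate Lemma~\ref{lemwd} already used above, which guarantees $y u^2\to0$ along suitable sequences $|y|\to\infty$. Everything else is a direct computation.
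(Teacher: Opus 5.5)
Your proposal is correct and is essentially the paper's proof. The lower bound comes from expanding $0\le\int_{\mathbb R}|u'+\alpha q y u|^2\,dy$ and integrating $y(u^2)'$ by parts, with the boundary term killed by Lemma~\ref{lemwd}; that lemma in fact gives $yu^2(y)\to0$ along the full limit $|y|\to\infty$, not just along sequences. The upper bound comes from the Gaussian $e^{-\alpha q y^2/2}$, for which the completed square vanishes.
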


\begin{proof}
When $q_-=q_+=q$, the variational characterization becomes
\[
\mu_\alpha(q,q)
=
\inf_{u\in\mathcal Q\setminus\{0\}}
\frac{
\int_{\mathbb R}
\left(
|u'|^2+\alpha^2q^2y^2u^2+\alpha q u^2
\right)dy
}{\int_{\mathbb R}u^2\,dy
}.
\]
We first prove the lower bound. For every $u\in\mathcal Q$, with $\int_{\mathbb R}u^2\,dy=1.$
By using Lemma \ref{lemwd}, we have $yu^2(y)\to0, \text{ as } |y|\to\infty,$
integration by parts gives 
\begin{align*}
0
\le\int_{\mathbb R}|u'+\alpha|q|yu|^2=
\int_{\mathbb R}
\left(
|u'|^2+\alpha^2q^2y^2u^2
\right)dy
-
\alpha |q|\int_{\mathbb R}u^2(y)\,dy.
\end{align*}
Hence, we immediately obtain
$
\mu_\alpha(q,q)\ge2\alpha q.
$

It suffices to obtain the reverse inequality. We define
$G_q(y)=
e^{-\frac{\alpha|q|}{2}y^2}.$
Then $G_q\in\mathcal Q\setminus\{0\}$.
Substituting $G_q$ into the preceding identity gives $
\mu_\alpha(q,q)\le 2\alpha q.$
\end{proof}

\begin{proposition}
\label{prop:positive-bounds}
Suppose that $q_->0$, $q_+>0$, and $q_-\neq q_+$. Then
\begin{equation}\label{eq:positive-bounds}
2\alpha\min\{q_-,q_+\}
<
\mu_\alpha(q_-,q_+)
<
2\alpha\max\{q_-,q_+\}.
\end{equation}
\end{proposition}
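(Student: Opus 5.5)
Without loss of generality I take $q_-<q_+$; the case $q_->q_+$ follows by the reflection $y\mapsto -y$, which swaps the roles of $q_-$ and $q_+$ and leaves both $\mathcal Q$ and the Rayleigh quotient invariant. The plan is to compare the potential $V$ with the two symmetric potentials
\[
V_-(y)=\alpha^2q_-^2y^2+\alpha q_-,\qquad V_+(y)=\alpha^2q_+^2y^2+\alpha q_+ .
\]
Since $t\mapsto \alpha^2t^2y^2+\alpha t$ is strictly increasing in $t>0$, we have $V_-\le V\le V_+$ pointwise. Moreover $V=V_-$ on $(-\infty,0)$, $V=V_+$ on $(0,\infty)$, and $V_-<V_+$ on $(0,\infty)$. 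Comparing Rayleigh quotients over the common class $\mathcal Q$ with Proposition~\ref{prop:equal-curvature} gives the non-strict bounds $2\alpha q_-\le \mu_\alpha(q_-,q_+)\le 2\alpha q_+$ at once. The remaining work is to make both inequalities strict.

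\textbf{Upper bound.} I would test $\mu_\alpha(q_-,q_+)$ with the Gaussian $G_{q_+}(y)=e^{-\alpha q_+y^2/2}$. By the computation in Proposition~\ref{prop:equal-curvature}, the quotient of $G_{q_+}$ with potential $V_+$ equals $2\alpha q_+$. Replacing $V_+$ by $V$ decreases the numerator by
\[
\int_{-\infty}^0 \bigl(V_+-V_-\bigr)G_{q_+}^2\,dy>0,
\]
because $G_{q_+}^2>0$ and $V_+-V_->0$ on $(-\infty,0)$. Hence $\mu_\alpha(q_-,q_+)<2\alpha q_+$ directly, with no need for attainment.

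\textbf{Lower bound.} This is the step I expect to be the main obstacle, because an infimum could a priori be approached without any strict gain. Here I use the fact, stated above and proved in Appendix~\ref{app}, that $\mu_\alpha(q_-,q_+)$ is attained by a positive $u\in\mathcal Q$. For this minimizer,
\[
\mu_\alpha(q_-,q_+)=\frac{\int_{\mathbb R}\bigl(|u'|^2+V_-u^2\bigr)\,dy+\int_0^\infty\bigl(V_+-V_-\bigr)u^2\,dy}{\int_{\mathbb R}u^2\,dy}.
\]
The first part of the quotient is at least $2\alpha q_-$ by Proposition~\ref{prop:equal-curvature}. The second integral is strictly positive, since $u>0$ and $V_+-V_->0$ on $(0,\infty)$. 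Therefore $\mu_\alpha(q_-,q_+)>2\alpha q_-$.

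If one prefers not to rely on positivity of the minimizer, it suffices that $u\not\equiv0$ on $(0,\infty)$. Suppose instead $u\equiv0$ there. Then $u(0)=u'(0)=0$, because $u\in H^2_{\mathrm{loc}}$ and so $u$ is $C^1$. The equation $-u''+Vu=\mu u$ on $(-\infty,0)$, whose coefficients are continuous, then forces $u\equiv0$ on $(-\infty,0)$ by ODE uniqueness, contradicting $u\neq0$. This completes the strict two-sided bound \eqref{eq:positive-bounds}.
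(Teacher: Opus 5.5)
Your proof is correct. Its skeleton matches the paper's: an explicit Gaussian trial function gives the strict upper bound, and the attained positive minimizer, with positive mass on $(0,\infty)$, gives the strict lower bound. The mechanism differs in both halves, however.

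The paper proves a half-line factorization,
\[
\int_I|u'-\alpha qyu|^2=\int_I|u'+\alpha|q|yu|^2+\alpha(|q|+q)\int_Iu^2 .
\]
It applies this separately on $(-\infty,0)$ and $(0,\infty)$ to obtain $\mu_\alpha\ge 2\alpha(q_-M_-+q_+M_+)$. For the upper bound it splices the two Gaussians $e^{-\alpha q_\mp y^2/2}$ at $y=0$ and evaluates the quotient exactly as $2\alpha\sqrt{q_-q_+}$.

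You instead reduce everything to Proposition~\ref{prop:equal-curvature} through the pointwise ordering $V_-\le V\le V_+$ (in fact $V_+-V_->0$ on all of $\mathbb R$). This needs no new integration by parts or Gaussian integrals, and it gives the non-strict bounds for free. Your ODE-uniqueness remark also shows that strictness needs only that the minimizer be a $C^1$ solution of the Euler--Lagrange equation, not the strong maximum principle.

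What the paper's route buys is quantitative information: a mass-weighted lower bound, and a sharper explicit upper bound. Your symmetric trial function $G_{q_+}$ yields the value $\alpha q_+(5+2t+t^2)/4$ with $t=q_-/q_+$. This exceeds the paper's $2\alpha q_+\sqrt t$ by $\alpha q_+(\sqrt t-1)^2(t+2\sqrt t+5)/4>0$ whenever $t\neq1$.
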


\begin{proof}
We first claim that for
$I=(-\infty,0)$ or $(0,\infty)$ and let $q\neq 0$. Then, for every
$u\in\mathcal Q$,
\begin{equation*}\label{eq:half-line-factorization}
\int_I |u'-\alpha qyu|^2\,dy
=
\int_I |u'+\alpha |q|yu|^2\,dy
+
\alpha(|q|+q)\int_Iu^2\,dy.
\end{equation*}
Indeed,
\[
|u'-\alpha qyu|^2
=
|u'+\alpha |q|yu|^2
-\alpha(q+|q|)y(u^2)'.
\]
Using the decay property Lemma \ref{lemwd}:
\[
yu^2(y)\to 0
\quad\text{as } |y|\to\infty,
\]
and integrating by parts on each half-line, we obtain
\eqref{eq:half-line-factorization}. In particular,
\begin{equation}\label{eq:half-line-positive}
\int_I |u'-\alpha qyu|^2\,dy
\ge
\begin{cases}
2\alpha q\displaystyle\int_Iu^2\,dy,
& q>0,\\[2mm]
0,
& q<0.
\end{cases}
\end{equation}

To prove this proposition, without loss of generality, assume that
$q_-<q_+.$
Let $u_1$ be the positive minimizer associated with
$\mu_\alpha(q_-,q_+)$, normalized by
$
\int_{\mathbb R}u_1^2\,dy=1.
$
Set
\[
M_-:=\int_{-\infty}^0u_1^2\,dy,
\quad
M_+:=\int_0^\infty u_1^2\,dy.
\]
Since $u_1>0$ on $\mathbb R$, we have
\[
M_->0,\quad M_+>0,\quad M_-+M_+=1.
\]
Applying \eqref{eq:half-line-positive} on the two half-lines gives
\begin{align*}
\mu_\alpha(q_-,q_+)
\ge
2\alpha q_-M_-+2\alpha q_+M_+
=
2\alpha q_-+
2\alpha(q_+-q_-)M_+>
2\alpha q_-.
\end{align*}
This proves the strict lower bound.

To prove the upper bound, we consider the test function
\[
G(y):=
\begin{cases}
e^{-\frac{\alpha q_-}{2}y^2},
&y<0,\\
e^{-\frac{\alpha q_+}{2}y^2},
&y\ge0.
\end{cases}
\]
Since $G$ and $G'$ are continuous at $0$, we have
$G\in\mathcal Q$. 
It follows from \eqref{xx} that
\begin{align*}
\mu_\alpha(q_-,q_+)\le \frac{
\int_{-\infty}^0
\left|G'-\alpha q_-yG\right|^2\,dy
+
\int_0^\infty
\left|G'-\alpha q_+yG\right|^2\,dy
}{
\int_{\mathbb R}G^2\,dy
}=
2\alpha\sqrt{q_-q_+}<
2\alpha q_+.
\end{align*}
Therefore,
\[
2\alpha q_-
<
\mu_\alpha(q_-,q_+)
<
2\alpha q_+,
\]
which proves \eqref{eq:positive-bounds}.
\end{proof}

\section{Proof of (1) of Theorem \ref{main}}\label{sec3}

We give the proof by using the upper bound estimate and the lower bound estimate for the principal eigenvalue $\lambda(d)$ respectively.
Throughout this section, we assume that
$m\in C^{1,1}([a,b])$ satisfies \textbf{\textup{(A)}} and
\[
\lim_{x\to x_0^-}\frac{m'(x)}{x-x_0}=q_->0,
\quad
\lim_{x\to x_0^+}\frac{m'(x)}{x-x_0}=q_+>0.
\]
\begin{lemma}
\label{lem:upper-bound}
Let $\lambda(d)$ be the principal eigenvalue of
the principal eigenvalue problem \eqref{eq:original-eigenproblem}, where $\alpha>0$, $c\in C([a,b])$, and $m\in C^{1,1}([a,b])$ satisfies
Assumption \textbf{\textup{A}}. Then we have 
\begin{equation*}\label{eq:main-limit}
\limsup_{d\to 0}\lambda_d
\le
\min \{c(a),
c(b),
c(x_0)+\mu_{\alpha}(q_-,q_+)
\},
\end{equation*}
where we define $\mu_{\alpha}(q_-,q_+)$ in \eqref{def:mu}.
\end{lemma}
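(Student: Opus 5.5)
We prove the three upper bounds separately by inserting suitable test functions into the $w$-form of the Rayleigh quotient \eqref{eq:variational-characterization}, and then take the minimum. Each bound has the form $\limsup_{d\to0}\lambda(d)\le$ target, so it suffices to exhibit, for each target, a family $w_d$ whose quotient tends to that target.

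\emph{Endpoint $a$.} Since $m'(a)<0$, near $a$ we use the $\phi$-form \eqref{cc} with weight $e^{2\alpha m/d}$, where $m$ is decreasing at $a$. Take $w_d(x)=e^{\alpha m(x)/d}\phi(x)$ with $\phi=\chi(x)$ a fixed cutoff equal to $1$ on $[a,a+\delta]$ and vanishing beyond $a+2\delta$. Then the weighted mass $\int e^{2\alpha m/d}\chi^2$ concentrates at $a$, because $m(a)>m(x)$ for $x\in(a,a+2\delta]$ when $\delta$ is small. The gradient term $d\int e^{2\alpha m/d}|\chi'|^2$ lives where $m\le m(a+\delta)<m(a)$, so it is exponentially small relative to the mass. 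Hence the quotient is $\int e^{2\alpha m/d}c\chi^2/\int e^{2\alpha m/d}\chi^2+o(1)$, which tends to a weighted average of $c$ over $[a,a+2\delta]$ and is $\le \max_{[a,a+2\delta]}c$. Letting $d\to0$ and then $\delta\to0$, continuity of $c$ gives $\limsup\lambda(d)\le c(a)$. The same argument at $b$, using $m'(b)>0$, gives the bound $c(b)$.

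\emph{Interior point $x_0$.} Fix $\varepsilon>0$ and choose $u\in C_c^\infty(\mathbb R)$ with $\|u\|_{L^2}=1$ such that the quotient in \eqref{xx} is at most $\mu_\alpha(q_-,q_+)+\varepsilon$. This is possible by density of $C_c^\infty$ in $\mathcal Q$ with respect to the form norm: truncate and mollify, using that the quadratic form in \eqref{def:mu} is continuous in the $\mathcal Q$-norm. Set $w_d(x)=u\bigl((x-x_0)/\sqrt d\bigr)$, which for small $d$ is supported inside $(a,b)$. With $y=(x-x_0)/\sqrt d$, $dx=\sqrt d\,dy$, and $w_d'=d^{-1/2}u'$, we get
\[
d\Bigl|w_d'-\tfrac{\alpha}{d}m'w_d\Bigr|^2=\Bigl|u'(y)-\alpha\tfrac{m'(x_0+\sqrt d y)}{\sqrt d}u(y)\Bigr|^2 .
\]
On the compact support of $u$, assumption \eqref{asss} gives $m'(x_0+\sqrt d y)/\sqrt d\to q_\pm y$ uniformly for $\pm y>0$, with a uniform bound $|m'(x_0+\sqrt d y)|/\sqrt d\le L|y|$ from the $C^{1,1}$ property and $m'(x_0)=0$. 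Dominated convergence then shows that the numerator, divided by $\sqrt d$, tends to the numerator in \eqref{xx}. The $c$-term tends to $c(x_0)$ by continuity of $c$. Hence $\limsup\lambda(d)\le c(x_0)+\mu_\alpha(q_-,q_+)+\varepsilon$, and we let $\varepsilon\to0$.

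The step needing most care is the density/approximation claim for $\mu_\alpha$ together with the uniform convergence of the rescaled drift. The approximation must be done with \eqref{xx} or \eqref{def:mu} (which agree on $\mathcal Q$) applied to compactly supported functions, so that no boundary terms arise. The uniform convergence uses only the one-sided limits \eqref{asss} and the Lipschitz bound on $m'$, so no further regularity of $m$ is required.
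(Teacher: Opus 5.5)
Your proposal is correct. The endpoint part is the paper's argument. The paper's trial function $\chi\,e^{\alpha(m-m(a))/d}$ is your choice $\phi=\chi$ up to a constant factor. The paper also makes your phrase ``exponentially small relative to the mass'' quantitative: it bounds the mass from below by $d\,e^{-2\alpha K}$ using the interval $[a+d,a+2d]$. You would need a bound of this kind, for instance the mass on a short interval $[a,a+\eta]$ where $m$ stays above $m(a+\delta)$.

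At $x_0$ the overall scheme is also the same. Both proofs use the $\sqrt d$-rescaling, both test with a compactly supported function so that locally uniform convergence $A_d\to A$ and $c(x_0+\sqrt d\,y)\to c(x_0)$ suffice, and both remove the approximation parameter after $d\to0$. What differs is where the compactly supported test function comes from.

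\begin{itemize}
\item The paper takes the positive principal eigenfunction $u_1$ from the appendix (Proposition~\ref{prop:principal-eigenfunction}) and truncates at a level, $\eta_\delta=(u_1-\delta)_+$. Positivity of $u_1$ and $u_1\to0$ at infinity give compact support, and dominated convergence recovers $\mu_\alpha(q_-,q_+)$ as $\delta\to0$.
\item You take an arbitrary $C_c^\infty$ near-minimizer, obtained from density of $C_c^\infty(\mathbb R)$ in $\mathcal Q$ for the form norm. This is valid because $0\le V\le C(1+y^2)$, so the form is continuous in the $\mathcal Q$-norm, and cutoff plus mollification converge in $H^1$ and in $L^2(y^2\,dy)$.
\end{itemize}

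Your route makes the upper bound independent of the existence, regularity and positivity of a minimizer. That is exactly the appendix material, which the paper justifies only briefly (for example, the strong maximum principle step). The cost is a routine density argument, and you need no decay lemma because no boundary terms appear. The paper's route is shorter once $u_1$ is available, and it uses $u_1$ anyway in Proposition~\ref{prop:positive-bounds}.
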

\begin{proof}
For the interior minimal point $x_0$,
we first aim to prove the upper bound estimate
\begin{align}\label{imsps}
\limsup_{d\to0}\lambda(d)\le c(x_0)+\mu_\alpha(q_-,q_+).
\end{align}

Without loss of generality, we assume that $x_0=0$ and  $a<0<b$. 
From  $x_0=0$, we have 
\[
m'(0)=0,
\quad
\lim_{x\to0^-}\frac{m'(x)}{x}=q_-,
\quad
\lim_{x\to0^+}\frac{m'(x)}{x}=q_+.
\]

Define
\[
A(y):=
\begin{cases}
q_-y,&y<0,\\
q_+y,&y>0.
\end{cases}
\]
Let $u_1\in\mathcal Q$ be the positive principal eigenfunction
associated with $\mu_\alpha(q_-,q_+)$, normalized by
$\int_{\mathbb R}u_1^2\,dy=1.$
Thus
\[
-u_1''+V(y)u_1
=\mu_\alpha(q_-,q_+)u_1,
\]
where
\[
V(y)=\alpha^2A^2(y)+\alpha A'(y)
=
\begin{cases}
\alpha^2q_-^2y^2+\alpha q_-,&y<0,\\
\alpha^2q_+^2y^2+\alpha q_+,&y>0.
\end{cases}
\]

For $\delta>0$ small, define
\[
\eta_\delta(y):=(u_1(y)-\delta)_+.
\]
Since $u_1(y)\to0$ as $|y|\to\infty$, the function
$\eta_\delta$ has compact support. Moreover,
$\eta_\delta\in H^1(\mathbb R)$ and
\[
\eta_\delta'(y)
=u_1'(y)\mathbf 1_{\{u_1>\delta\}}(y)
\quad\text{for a.e. }y\in\mathbb R.
\]

For sufficiently small $d>0$, define
\[
w_{d,\delta}(x)
:=
d^{-1/4}\eta_\delta\left(\frac{x}{\sqrt d}\right),
\qquad x\in(a,b).
\]
Since $\eta_\delta$ has compact support,  for all sufficiently small $d$, we have 
\[
w_{d,\delta}\in H_0^1(a,b)
\subset H^1(a,b).
\]

It follows from the variational characterization that
\begin{align}
\lambda(d)
&\le\frac{
\int_{a}^b
d\left|
w_{d,\delta}'-\frac{\alpha}{d}m'w_{d,\delta}
\right|^2
+cw_{d,\delta}^2
dx
}{
\int_{a}^bw_{d,\delta}^2\,dx
}.\nonumber\\
&\overset{x=\sqrt{d}y}{=}
\frac{
\int_{\mathbb R}
\left|\eta_\delta'-\alpha \frac{m'(\sqrt{d}y)}{\sqrt{d}}\eta_\delta\right|^2
+c(\sqrt d\,y)\eta_\delta^2
dy
}{
\int_{\mathbb R}\eta_\delta^2\,dy
}.
\nonumber\\
&=
\frac{
\int_{\mathbb R}
\left|\eta_\delta'-\alpha A_d(y)\eta_\delta\right|^2
+c(\sqrt d\,y)\eta_\delta^2
dy
}{
\int_{\mathbb R}\eta_\delta^2\,dy
},
\label{eq:upper-test-delta}
\end{align}
where we denote $A_d(y):=\frac{m'(\sqrt d\,y)}{\sqrt d}$ in the second equality.

We note that for each fixed $\delta>0$, $\eta_\delta$ has compact support.
The assumptions on $m'$ imply that
$A_d(y)\to A(y), \text{ as } d \to 0,$
uniformly on compact subsets of $\mathbb R$. Moreover,
$
c(\sqrt d y)\to c(0), \text{ as } d \to 0,
$
uniformly on compact subsets of $\mathbb R$. Hence, letting
$d\to0$ in \eqref{eq:upper-test-delta}, we obtain
\begin{align}
\limsup_{d\to0}\lambda(d)
\le
\frac{
\int_{\mathbb R}
\left[
\left|\eta_\delta'-\alpha A(y)\eta_\delta\right|^2
+c(0)\eta_\delta^2
\right]dy
}{
\int_{\mathbb R}\eta_\delta^2\,dy
}=\frac{
\int_{\mathbb R}
|\eta_\delta'|^2+V(y)\eta_\delta^2
\,dy
}{
\int_{\mathbb R}\eta_\delta^2\,dy
}+c(0).
\label{eq:upper-after-d}
\end{align}
In the second equality, we use the fact that 
 $\eta_\delta^2\in H^1(\mathbb R)$ has compact support on $\mathbb R$ and the 
integration by parts separately on $(-\infty,0)$ and $(0,\infty)$ due to $A(0^-)=A(0^+)=0.$

We finally let $\delta\to0$ in \eqref{eq:upper-after-d}. Since $u_1>0$, we have
\[
|u_1-\eta_\delta|
=\min\{u_1,\delta\}\le u_1.
\]
Note that $u_1\in H^1(\mathbb R)$, Lebesgue dominated theorem implies that 
\[
\eta_\delta\to u_1
 \text{ in }L^2(\mathbb R) \ \text{ as } \delta\to 0.
\]
Moreover, since
$\eta_\delta'
=u_1'\mathbf 1_{\{u_1>\delta\}}$,
we also have
\[
\eta_\delta'\to u_1'
 \text{ in }L^2(\mathbb R) \ \text{ as } \delta\to 0.
\]
We note that $yu_1\in L^2(\mathbb R)$, then Lebesgue dominated theorem implies that 
\[
y\eta_\delta\to yu_1
 \text{ in }L^2(\mathbb R)\ \text{ as }  \delta\to 0.
\]
Hence, as $\delta\to 0$,  we obtain
\[
\frac{
\int_{\mathbb R}
\left(|\eta_\delta'|^2+V(y)\eta_\delta^2\right)dy
}{
\int_{\mathbb R}\eta_\delta^2\,dy
}
\to
\frac{
\int_{\mathbb R}
\left(|u_1'|^2+V(y)u_1^2\right)dy
}{
\int_{\mathbb R}u_1^2\,dy
}
=
\mu_\alpha(q_-,q_+).
\]
Letting $\delta\to0$ in \eqref{eq:upper-after-d}, we derive 
\[
\limsup_{d\to0}\lambda(d)
\le c(0)+\mu_\alpha(q_-,q_+).
\]
Hence, \eqref{imsps} holds true.

Next, we consider upper bound effects the boundary points $x=a$ and $x=b$ and we aim to prove that 
 \begin{align}\label{bd
 }
 \limsup_{d\to 0}\lambda(d)\le \min\{c(a),c(b)\},
 \end{align} 
 and hence, combining with \eqref{imsps},  we complete the proof.

We prove the assertion at $a$; the proof at $b$ is symmetric.  Assume $m'(a)<0$.  Choose $\delta>0$ and $\eta>0$ such that
\[
m'(x)\le-\eta
\qquad (a\le x\le a+2\delta).
\]
Let $\chi\in C^1([a,b])$ satisfy
\[
0\le\chi\le1,
\quad
\chi\equiv1\ \text{on }[a,a+\delta],
\quad
\chi\equiv0\ \text{on }[a+2\delta,b], \quad|\chi'|\le \frac{C}{\delta}.
\]
Define
\begin{equation}\label{eq:endpoint-trial}
v_d(x)
:=
\chi(x)
\exp\!\left(\frac{\alpha(m(x)-m(a))}{d}\right).
\end{equation}

We insert the test function $v_d(x)$ into  the  variational characterization of $\lambda_d$, and  direct calculation yields that
\begin{align}
\lambda(d)
&\le\frac{
\int_{a}^b
d\left|
v_{d}'-\frac{\alpha}{d}m'v_{d}
\right|^2
+cv_{d}^2
dx
}{
\int_{a}^b v_{d}^2\,dx
}\nonumber\\
&=\frac{
\int_{a}^{a+2\delta}
d \chi'^2(x)
\exp\left(\frac{2\alpha(m(x)-m(a))}{d}\right)
+cv_{d}^2
dx
}{
\int_{a}^b v_{d}^2\,dx
}\nonumber\\
&\le  \frac{
\int_{a+\delta}^{a+2\delta}
d \chi'^2(x)
\exp\left(\frac{2\alpha(m(x)-m(a))}{d}\right)
dx
}{
\int_{a}^b v_{d}^2\,dx
}+\max_{x\in [a,a+2\delta]}c(x)\nonumber\\
&:=I_d+\max_{x\in [a,a+2\delta]}c(x).
\label{eqbdup}
\end{align}

We next estimate $I_d$.
Since $m'(a)<0$, there exists $\gamma>0$ such that when $a+\delta\le x\le a+2\delta$, we have
\[
m(x)\le m(a)-\gamma.
\]
For $d<\frac{\delta}{2}$, we note that
$m(a+2d)-m(a)\ge -Kd$ for some positive $K$. Then we have
\begin{equation*}\label{eq:endpoint-den-lower}
\int_{a}^b v_{d}^2\,dx
\ge
\int_{a+d}^{a+2d}
e^{\frac{2\alpha(m(x)-m(a))}{d}}\dd x
\ge
\int_{a+d}^{a+2d}
e^{\frac{2\alpha(m(a+2d)-m(a))}{d}}\dd x
\ge
d e^{-2\alpha K}.
\end{equation*}
Then, we obtain
\begin{align}\label{ID}
    I_d= \frac{
\int_{a}^{a+2\delta}
d \chi'^2(x)
\exp\left(\frac{2\alpha(m(x)-m(a))}{d}\right)
dx
}{
\int_{a}^b v_{d}^2\,dx
}\nonumber\le \frac{
\int_{a+\delta}^{a+2\delta}
\frac{Cd}{\delta^2}
\exp\left(\frac{2\alpha(m(x)-m(a))}{d}\right)
dx
}{
d e^{-2\alpha K}
}\le \frac{\frac{Cd}{\delta} e^{-2\alpha\gamma/d}}{d e^{-2\alpha K}}.
\end{align}

We recall \eqref{eqbdup}:
\begin{align}
    \lambda(d)
\le I_d+\max_{x\in [a,a+2\delta]}c(x)\le \frac{\frac{Cd}{\delta} e^{-2\alpha\gamma/d}}{d e^{-2\alpha K}}+\max_{x\in [a,a+2\delta]}c(x).
\end{align}
Let $d\to 0$ first and then $\delta \to 0$, we obtain the upper bound estimate  $\limsup_{d\to 0}\lambda(d)\le c(a)$. Similarly, we obtain the upper bound estimate for $x=b$ and then
$\limsup_{d\to0}\lambda(d)\le \min\{ c(a), c(b)\}.$
\end{proof}

Before we give the lower bound estimate,
we first give a local lower bound estimate of the non-critical points.
\begin{lemma}\label{lem:weighted-noncritical-estimate}
Assume that $m\in C^1$, and let $J=(r,s)\subset(a,b)$. Suppose that, for some $\delta>0$,
$|m'(x)|\ge\delta$ for
$x\in[r,s]$.
Then, for every $w\in H_0^1(r,s)$,
\begin{equation}\label{eq:weighted-noncritical-estimate}
\int_a^bd\left|w'-\frac{\alpha }{d}m'w\right|^2\,dx\ge \frac{\alpha^2\delta^2}{d} \int_a^bw^2\,dx,\quad w\in H^1_0(r,s).
\end{equation}
\end{lemma}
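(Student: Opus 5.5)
Since $w\in H_0^1(r,s)$ vanishes outside $(r,s)$, both integrals in \eqref{eq:weighted-noncritical-estimate} reduce to integrals over $(r,s)$. The plan is to expand the square and dispose of the cross term by integrating by parts. Expanding gives
\begin{align*}
\int_r^s d\left|w'-\frac{\alpha}{d}m'w\right|^2dx
=\int_r^s d|w'|^2\,dx-\alpha\int_r^s m'(w^2)'\,dx+\frac{\alpha^2}{d}\int_r^s |m'|^2w^2\,dx .
\end{align*}
The first term is nonnegative and the last term is at least $\frac{\alpha^2\delta^2}{d}\int_r^s w^2\,dx$. So the whole task is to control the cross term $-\alpha\int_r^s m'(w^2)'\,dx$.

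The cross term is the main obstacle. When $m\in C^{1,1}$, integrating by parts with the vanishing boundary values of $w^2$ turns it into $\alpha\int_r^s m''w^2\,dx$, which has no sign. Bounding it by $\alpha\|m''\|_\infty\int w^2$ only gives the estimate with a constant loss, namely $\frac{\alpha^2\delta^2}{d}-\alpha\|m''\|_\infty$. That is harmless for the application as $d\to0$, but it is not exactly the stated inequality. The exact constant therefore needs a different route.

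The route I would try first is the conjugation trick: work with $\psi=e^{-\alpha m/d}w\in H_0^1(r,s)$, so that $w'-\frac{\alpha}{d}m'w=e^{\alpha m/d}\psi'$. Since $m'$ is continuous and $|m'|\ge\delta$ on $[r,s]$, $m$ is strictly monotone there, say $m'\ge\delta$. The inequality then becomes a weighted Hardy/Poincaré inequality
\begin{align*}
d\int_r^s e^{2\alpha m/d}|\psi'|^2\,dx\ge\frac{\alpha^2\delta^2}{d}\int_r^s e^{2\alpha m/d}\psi^2\,dx,
\end{align*}
with an exponentially increasing weight. I would prove it via the one-dimensional ground-state identity. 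Take $h=e^{-\beta x}$ with $\beta=\alpha\delta/d$, and use $\psi=0$ at $r$ to write $\psi^2(x)$ as an integral of $\psi\psi'$ from $r$. The monotonicity of $m$ should give the sharp constant $\frac{\alpha^2\delta^2}{d^2}\cdot d$ when $m$ is linear, since the weight $e^{2\alpha\delta x/d}$ gives exactly $\beta^2$ for the bottom of the spectrum on a half-line. For general monotone $m$, the change of variables $t=m(x)$ should reduce to that linear case, up to handling $m'$ in the Jacobian.

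If the sharp constant cannot be secured this way, I would state and use the estimate in the form obtained from the first approach,
\begin{align*}
\int_a^b d\left|w'-\frac{\alpha}{d}m'w\right|^2dx\ge\left(\frac{\alpha^2\delta^2}{d}-\alpha\|m''\|_{L^\infty}\right)\int_a^b w^2dx .
\end{align*}
This follows directly from the integration by parts above. It suffices for all later uses, because the right-hand side still tends to $+\infty$ like $d^{-1}$ as $d\to0$.
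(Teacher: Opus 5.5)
Your substitution $\psi=e^{-\alpha m/d}w$ is exactly the one the paper uses. Your reformulation as the weighted inequality
\[
d\int_r^s e^{2\alpha m/d}|\psi'|^2\,dx\ge\frac{\alpha^2\delta^2}{d}\int_r^s e^{2\alpha m/d}\psi^2\,dx
\]
is also correct. But this inequality is the whole content of the lemma, and the proposal never proves it. The ground-state identity is only claimed to work for linear $m$. The reduction of general $m$ via $t=m(x)$ is said to go through ``up to handling $m'$ in the Jacobian'', but that handling is never done, and you then retreat to a fallback. The fallback does not prove the statement. It replaces the constant by $\frac{\alpha^2\delta^2}{d}-\alpha\|m''\|_{L^\infty}$ and needs $m''\in L^\infty$, whereas the lemma assumes only $m\in C^1$. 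It would suffice for the later use in Lemma~\ref{lem:lowr-bound}, but that is a different claim. As written, the proposal does not establish the lemma.

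The missing step is two lines. Assume $m'\ge\delta$. The paper uses $\bigl(e^{2\alpha m/d}\bigr)'\ge\frac{2\alpha\delta}{d}e^{2\alpha m/d}$, multiplies by $\psi^2$, integrates by parts using $\psi(r)=\psi(s)=0$, and applies Cauchy--Schwarz:
\[
\frac{2\alpha\delta}{d}\int_r^s e^{2\alpha m/d}\psi^2\,dx\le-2\int_r^s e^{2\alpha m/d}\psi\psi'\,dx\le2\Bigl(\int_r^s e^{2\alpha m/d}\psi^2\,dx\Bigr)^{1/2}\Bigl(\int_r^s e^{2\alpha m/d}|\psi'|^2\,dx\Bigr)^{1/2}.
\]
Squaring and multiplying by $d$ gives the claim.

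Your own routes also close once the computations are actually carried out.
\begin{itemize}
\item Ground-state route: take $\rho=e^{2\alpha m/d}$, $h=e^{-\beta x}$ and $\beta=\alpha\delta/d$. Then $-(\rho h')'=\beta\bigl(\frac{2\alpha m'}{d}-\beta\bigr)\rho h\ge\beta^2\rho h$ for every $m$ with $m'\ge\delta$, not only linear $m$. Integrating Picone's identity $\rho|\psi'|^2-\rho h'(\psi^2/h)'=\rho|\psi'-\frac{h'}{h}\psi|^2\ge0$ by parts then finishes the proof directly.
\item Change of variables $t=m(x)$: the Jacobian puts a factor $m'\ge\delta$ on the gradient term and $1/m'\le1/\delta$ on the mass term. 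Combined with the linear-case bound $\int e^{2\alpha t/d}|\Psi'|^2\,dt\ge\frac{\alpha^2}{d^2}\int e^{2\alpha t/d}\Psi^2\,dt$, this yields exactly the factor $\delta^2$.
\end{itemize}
All three arguments need only $m\in C^1$. The case $m'\le-\delta$ follows by reflecting $x\mapsto-x$.
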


\begin{proof}
Since $m'$ is continuous and does not vanish on the connected
interval $[r,s]$, it has a fixed sign.
Assume first that
\[
m'(x)\ge\delta,
\qquad x\in[r,s].
\]
A direct calculation gives
\[
\left(e^{\frac{2\alpha m(x)}{d}}\right)'
=
\frac{2\alpha}{d}m'(x)
e^{\frac{2\alpha m(x)}{d}}
\ge
\frac{2\alpha\delta}{d}
e^{\frac{2\alpha m(x)}{d}}>0.
\]
It follows that
\begin{align*}
\frac{2\alpha\delta}{d}
\int_r^s
e^{\frac{2\alpha m(x)}{d}}u^2(x)\,dx
&\le
\int_r^s
\left(e^{\frac{2\alpha m(x)}{d}}\right)'
u^2(x)\,dx=
e^{\frac{2\alpha m(x)}{d}}u^2(x)
|_r^s
-
2\int_r^s
e^{\frac{2\alpha m(x)}{d}}
u(x)u'(x)\,dx,
\end{align*}
where  $w=e^{\frac{\alpha m}{d}}u$.
Since $w\in H_0^1(r,s)$, we have $u(r)=u(s)=0$, and hence
\begin{align}
   \frac{2\alpha\delta}{d}
\int_r^s
e^{\frac{2\alpha m(x)}{d}}u^2(x)\,dx
&\le
-2\int_r^s
e^{\frac{2\alpha m(x)}{d}}
u(x)u'(x)\,dx\nonumber\\
&\le 
2
\left(
\int_r^s
e^{\frac{2\alpha m(x)}{d}}u^2(x)\,dx
\right)^{1/2}
\left(
\int_r^s
e^{\frac{2\alpha m(x)}{d}}|u'(x)|^2\,dx
\right)^{1/2}, 
\end{align}
where we use the Cauchy-Schwarz inequality in the last inequality.
Therefore we have
\[
\frac{\alpha\delta}{d}
\left(
\int_r^s
e^{\frac{2\alpha m(x)}{d}}u^2(x)\,dx
\right)^{1/2}
\le
\left(
\int_r^s
e^{\frac{2\alpha m(x)}{d}}|u'(x)|^2\,dx
\right)^{1/2}.
\]
Squaring both sides and multiplying by $d$, we obtain
\[
d\int_r^s
e^{\frac{2\alpha m(x)}{d}}|u'(x)|^2\,dx
\ge
\frac{\alpha^2\delta^2}{d}
\int_r^s
e^{\frac{2\alpha m(x)}{d}}u^2(x)\,dx.
\]
We note that $w=e^{\frac{\alpha m}{d}}u$, equivalently,  we have
$$\int_a^bd\left|w'-\frac{\alpha }{d}m'w\right|^2\,dx\ge \frac{\alpha^2\delta^2}{d} \int_a^bw^2\,dx,\quad w\in H^1_0(r,s).$$

If
\[
m'(x)\le-\delta,
\qquad x\in[r,s],
\]
then
\[
-\left(e^{\frac{2\alpha m(x)}{d}}\right)'
=
-\frac{2\alpha}{d}m'(x)
e^{\frac{2\alpha m(x)}{d}}
\ge
\frac{2\alpha\delta}{d}
e^{\frac{2\alpha m(x)}{d}}>0.
\]
Applying the same argument to
$-\left(e^{2\alpha m(x)/d}\right)'$ gives
\eqref{eq:weighted-noncritical-estimate}.
\end{proof}

Next, we give a local lower bound  estimate for the unique minimal critical point $x=x_0$.
Define
\begin{align*}
    E[w]=\int_a^b
\left[
d\left|w'-\frac{\alpha}{d}m'w\right|^2
+cw^2
\right]dx.
\end{align*}

\begin{lemma}\label{lem:lower-x0}
For every $\varepsilon>0$, there exists $\delta>0$  such that for
every
$w\in H_0^1(x_0-\delta,x_0+\delta),$
one has
\begin{equation}\label{eq:lower-x0-local}
E[w]
\ge
\left(c(x_0)+\mu_\alpha(q_-,q_+)-C\varepsilon\right)
\|w\|_{L^2(a,b)}^2,
\end{equation}
for some positive constant $C$.
Here $w=0$  outside the interval $(x_0-\delta,x_0+\delta)$.
\end{lemma}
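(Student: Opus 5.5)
The plan is to rescale around $x_0$ and compare the rescaled drift with the piecewise linear profile $A(y)$, using the variational characterization \eqref{xx} of $\mu_\alpha(q_-,q_+)$. Take $x_0=0$ as in Lemma~\ref{lem:upper-bound}. Given $\varepsilon>0$, choose $\delta>0$ such that
\[
|c(x)-c(0)|\le\varepsilon, \qquad \Bigl|\frac{m'(x)}{x}-q_\pm\Bigr|\le\varepsilon \quad\text{for } 0<\pm x<\delta .
\]
This is possible by continuity of $c$ and by \eqref{asss}. Then $m'(x)=A(x)+r(x)$ with $|r(x)|\le\varepsilon|x|$ on $(-\delta,\delta)$. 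For $w\in H_0^1(-\delta,\delta)$ extended by zero, the $c$-term immediately gives $\int cw^2\ge (c(0)-\varepsilon)\|w\|^2$, so only the drift term needs work.

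Next set $w(x)=d^{-1/4}u(x/\sqrt d)$. The function $u$ is compactly supported in $\mathbb R$, so $u\in\mathcal Q$, and $\|u\|_{L^2}=\|w\|_{L^2}$. The same change of variables as in \eqref{eq:upper-test-delta} gives
\[
\int d\Bigl|w'-\tfrac{\alpha}{d}m'w\Bigr|^2dx=\int_{\mathbb R}\bigl|u'-\alpha A(y)u-\alpha R_d(y)u\bigr|^2dy,
\qquad R_d(y)=\frac{r(\sqrt d y)}{\sqrt d},\quad |R_d(y)|\le\varepsilon|y| .
\]
Here the homogeneity of $A$ is used: $m'(\sqrt d y)/\sqrt d=A(y)+R_d(y)$.

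The estimate is then an elementary inequality. Writing $P=u'-\alpha Au$ and $Q=\alpha R_d u$, we have $|P-Q|^2\ge(1-\theta)|P|^2-(\theta^{-1}-1)|Q|^2$ for $\theta\in(0,1)$. By \eqref{xx},
\[
\int|P|^2\ge\mu_\alpha\|u\|^2,
\qquad
\int|Q|^2\le\alpha^2\varepsilon^2\int y^2u^2 .
\]

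The main obstacle is that $\int y^2u^2$ is not controlled by $\|u\|^2$ uniformly in $d$. I would handle it by absorbing it into $\int|P|^2$ itself. From the half-line identity in the proof of Proposition~\ref{prop:positive-bounds},
\[
\int|P|^2=\int|u'+\alpha|A|u|^2+2\alpha\int A'u^2
\quad\text{and}\quad
\int_{\mathbb R}\bigl(|u'|^2+\alpha^2A^2u^2\bigr)=\int|P|^2+\alpha\int A'u^2 ,
\]
with the boundary terms vanishing by compact support. Since $A'\le\max q_\pm$, this yields
\[
\alpha^2\min\{q_-,q_+\}^2\int y^2u^2\le\int|P|^2+\alpha\max\{q_-,q_+\}\|u\|^2 .
\]
Hence
\[
\int|Q|^2\le\frac{\varepsilon^2}{\min q_\pm^2}\Bigl(\int|P|^2+\alpha\max q_\pm\|u\|^2\Bigr).
\]
Choose $\theta=\varepsilon$. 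The coefficient of $\int|P|^2$ becomes $(1-\varepsilon)-(\varepsilon^{-1}-1)\varepsilon^2/\min q_\pm^2$, which equals $1-O(\varepsilon)$ and is positive for small $\varepsilon$. Using $\int|P|^2\ge\mu_\alpha\|u\|^2$ once more then gives
\[
\int|P-Q|^2\ge\bigl(\mu_\alpha-C\varepsilon\bigr)\|u\|^2,
\]
where $C$ depends only on $\alpha$, $q_\pm$ and $\mu_\alpha\le 2\alpha\max q_\pm$ (Proposition~\ref{prop:positive-bounds}). This bound is uniform in $d$ and in $w$.

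Combining it with the $c$-estimate yields \eqref{eq:lower-x0-local}.
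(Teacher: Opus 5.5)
Your proposal is correct and follows essentially the same route as the paper: rescale by $\sqrt d$, write the rescaled drift term as $P-Q$ with $|Q|\le\alpha\varepsilon|y||u|$, apply a $\theta$-weighted Young inequality, absorb $\int y^2u^2$ into $\int|P|^2$ using the coercivity of $A$, and conclude with $\int|P|^2\ge\mu_\alpha(q_-,q_+)\|u\|^2$. One slip is harmless but worth correcting. The correct identity is $\int(|u'|^2+\alpha^2A^2u^2)=\int|P|^2-\alpha\int A'u^2$, and $|A|$ should read $A$ in your first identity. Since $A'\ge0$, this gives $\alpha^2\min\{q_-,q_+\}^2\int y^2u^2\le\int|P|^2$ directly, as in the paper. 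Your extra term $\alpha\max\{q_-,q_+\}\|u\|^2$ is therefore unnecessary, although the weaker inequality you state is still true.
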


\begin{proof}
Let
 $q_0:=\min\{q_-,q_+\}>0,
q_1:=\max\{q_-,q_+\}.$
Fix $0<\varepsilon<\frac12 q_0$. Without loss of generality, we assume that $x_0=0$.
By the assumptions on $m'(x)$ and the continuity of $c(x)$, there exists
$\delta=\delta(\varepsilon)>0$ such that
\begin{align*}
    \left|m'(x)-q_-x\right|
\le \varepsilon|x|,
\quad -\delta<x<0,
\quad \left|m'(x)-q_+x\right|
\le \varepsilon|x|,
\quad 0<x<\delta,
\end{align*}
and
\[
c(x)\ge c(0)-\varepsilon,
\qquad |x|<\delta.
\]

For $w\in H_0^1(-\delta,\delta)$, we introduce the rescaled variables
\[
y:=\frac{x}{\sqrt d},
\quad
z(y):=d^{1/4}w(\sqrt d\,y),
\]
and let
\[
J_{d,\delta}
:=
\left(
-\frac{\delta}{\sqrt d},
\frac{\delta}{\sqrt d}
\right).
\]
Since $w\in H_0^1( -\delta,\delta)$, we have
$z\in H_0^1(J_{d,\delta})$. We extend $z$ by zero outside
$J_{d,\delta}$ and regard it as an element of $H^1(\mathbb R)$. Then we obtain
\[
\int_{-\delta}^{\delta}w^2(x)\,dx
=
\int_{\mathbb R}z^2(y)\,dy.
\]
We recall that
$A_d(y)
:=
\frac{m'(\sqrt d\,y)}{\sqrt d}
$
and
\[
A(y):=
\begin{cases}
q_-y,&y<0,\\
q_+y,&y>0.
\end{cases}
\]
Moreover, the choice of $\delta$ implies
\begin{equation}
\label{eq:Ad-A-relative-bound}
|A_d(y)-A(y)|
\le \varepsilon|y|,
\qquad y\in J_{d,\delta}.
\end{equation}

Since for almost everywhere 
$y\in\mathbb R$, we have 
$A(y)^2\geq q_0^2y^2,
A'(y)\geq 0$,
and  $A$ is continuous at $0$,
 we use integration by parts to the interval $(-\infty,0)$ and $(0,+\infty)$ gives
\begin{align}
\int_{\mathbb R}|z'-\alpha Az|^2\,dy
&=
\int_{\mathbb R}
\left(
|z'|^2+\alpha^2A^2z^2+\alpha A'z^2
\right)\,dy \geq
\alpha^2q_0^2
\int_{\mathbb R}y^2z^2\,dy.
\end{align}
Consequently,
\begin{equation}
\label{eq:weighted-coercivity-general}
\int_{\mathbb R}y^2z^2\,dy
\leq
\frac{1}{\alpha^2q_0^2}
\int_{\mathbb R}|z'-\alpha Az|^2\,dy.
\end{equation}

By the variational characterization of
$\mu_\alpha(q_-,q_+)$,
\begin{align}\label{vc}
    \int_{\mathbb R}|z'-\alpha Az|^2\,dy
\ge
\mu_\alpha(q_-,q_+)
\int_{\mathbb R}z^2\,dy.
\end{align}
Now, we can obtain the following estimate for the energy functional $E[w]$ by selecting $\varepsilon$ small:
\begin{align}
    E[w]&=\int_{-\delta}^{\delta}d\left|
w'(x)-\frac{\alpha}{d}m'(x)w(x)
\right|^2+c(x)w^2(x)dx\nonumber\\
&\ge\int_{\mathbb R}
\left|z'(y)-\alpha A_d(y)z(y)\right|^2+(c(0)-\varepsilon)z^2(y)dy\nonumber\\
&=\int_{\mathbb R}
\left|(z'(y)-\alpha A(y)z(y))-\alpha z(y)( A_d(y)-A(y))\right|^2+(c(0)-\varepsilon)z^2(y)dy\nonumber\\
&\ge(1-\theta)\int_{\mathbb R}
|z'(y)-\alpha A(y)z(y)|^2dy-\frac{\alpha^2}{\theta}\int_{\mathbb R}| A_d(y)-A(y)|^2z^2(y)dy+(c(0)-\varepsilon)\int_{\mathbb R}z^2(y)dy\nonumber\\
&\ge(1-\theta)\int_{\mathbb R}
|z'(y)-\alpha A(y)z(y)|^2dy-\frac{\alpha^2\varepsilon^2}{\theta}\int_{\mathbb R}y^2z^2(y)dy+(c(0)-\varepsilon)\int_{\mathbb R}z^2(y)dy\nonumber\\
&\ge(1-\theta-\frac{\varepsilon^2}{\theta q_0^2})\int_{\mathbb R}
|z'(y)-\alpha A(y)z(y)|^2dy+(c(0)-\varepsilon)\int_{\mathbb R}z^2(y)dy\nonumber\\
&\ge \left[\left(1-\theta-\frac{\varepsilon^2}{q_0^2\theta}\right)\mu_\alpha(q_-,q_+) +(c(0)-\varepsilon)\right]\int_{\mathbb R}
z^2(y) dy\nonumber\\
&=\left[\left(1-\theta-\frac{\varepsilon^2}{q_0^2\theta}\right)\mu_\alpha(q_-,q_+) +(c(0)-\varepsilon)\right]\int_{-\delta}^{\delta}w^2(x)\,dx.
\end{align}
In the second inequality, we use the elementary inequality: 
for any $\theta\in(0,1)$, it  holds that
\[
|X-Y|^2
\ge
(1-\theta)|X|^2-\frac{1}{\theta}|Y|^2.
\]
We use the estimate \eqref{eq:Ad-A-relative-bound} in the third inequality.
In the forth inequality, we use the inequality \eqref{eq:weighted-coercivity-general} and in the fifth
 inequality, we use  \eqref{vc}. Finally, let $\theta=\frac{\varepsilon}{ q_0}$, we obtain
\begin{align*}
   E[w]&\ge \left[(1-\frac{2\varepsilon}{ q_0})\mu_\alpha(q_-,q_+)+(c(0)-\varepsilon)\right]\int_{-\delta}^{\delta}w^2(x)\,dx\nonumber\\
&\ge (\mu_\alpha(q_-,q_+) +c(0)-C\varepsilon)\|w\|_{L^2(a,b)}^2. 
\end{align*}
\end{proof}

Finally, we are at a position to  obtain a global lower bound estimate of $\lambda(d)$.

\begin{lemma}
\label{lem:lowr-bound}
Let $\lambda(d)$ be the principal eigenvalue of
the principal eigenvalue problem \eqref{eq:original-eigenproblem}, where $\alpha>0$, $c\in C([a,b])$, and $m$ satisfies
Assumption \textbf{\textup{A}}. Then we have 
\begin{equation*}\label{eq:main-limit}
\liminf_{d\to 0}\lambda(d)
\ge
\min\{c(a),
c(b),
c(x_0)+\mu_{\alpha}(q_+,q_-)
\},
\end{equation*}
where we define $\mu_{\alpha}(q_+,q_-)$ in \eqref{def:mu}.
\end{lemma}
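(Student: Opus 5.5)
We would prove the lower bound by an IMS-type localization: split $[a,b]$ into neighborhoods of $a$, $b$, $x_0$, and an interior region where $|m'|$ is bounded below. We apply Lemma~\ref{lem:lower-x0} near $x_0$, Lemma~\ref{lem:weighted-noncritical-estimate} in the interior region, and a direct argument near the endpoints. Note that $\mu_\alpha(q_+,q_-)$ in the statement should be read as $\mu_\alpha(q_-,q_+)$; indeed the reflection $y\mapsto -y$ shows the two coincide.

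Fix $\varepsilon>0$ and take $\delta$ from Lemma~\ref{lem:lower-x0}. We also shrink $\delta$ so that $c\ge c(a)-\varepsilon$ on $[a,a+2\delta]$ and $c\ge c(b)-\varepsilon$ on $[b-2\delta,b]$. By \textbf{(A)} and continuity of $m'$, there is $\kappa>0$ with $|m'|\ge\kappa$ on $[a,x_0-\delta/2]\cup[x_0+\delta/2,b]$. Choose smooth $\chi_a,\chi_0,\chi_b,\chi_1\ge0$ with $\chi_a^2+\chi_0^2+\chi_b^2+\chi_1^2\equiv1$. Here $\chi_a$ is supported in $[a,a+2\delta)$ and equals $1$ near $a$, $\chi_b$ is analogous at $b$, $\chi_0$ is supported in $(x_0-\delta,x_0+\delta)$ and equals $1$ near $x_0$, and $\chi_1$ is supported in a compact subset of $(a+\delta,x_0-\delta/2)\cup(x_0+\delta/2,b-\delta)$ (split into two pieces if needed). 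All $|\chi_i'|\le C_\delta$.

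The key identity is the IMS formula for the magnetic-type form:
\[
\int_a^b d\left|w'-\tfrac{\alpha}{d}m'w\right|^2dx=\sum_i\int_a^b d\left|(\chi_iw)'-\tfrac{\alpha}{d}m'\chi_iw\right|^2dx-d\sum_i\int_a^b|\chi_i'|^2w^2dx .
\]
It follows by expanding $(\chi_iw)'-\frac{\alpha}{d}m'\chi_iw=\chi_i(w'-\frac{\alpha}{d}m'w)+\chi_i'w$ and using $\sum\chi_i\chi_i'=0$. The localization error is therefore at most $C_\delta d\|w\|^2$, which vanishes as $d\to0$.

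Take $w$ to be the transformed principal eigenfunction and bound each piece. For the $\chi_0$ piece, Lemma~\ref{lem:lower-x0} gives $E[\chi_0w]\ge(c(x_0)+\mu_\alpha-C\varepsilon)\|\chi_0w\|^2$. For the $\chi_1$ piece, Lemma~\ref{lem:weighted-noncritical-estimate} applied on each component gives at least $(\alpha^2\kappa^2/d-\|c\|_\infty)\|\chi_1w\|^2$, which exceeds any fixed constant for small $d$.

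The endpoint pieces are the main obstacle, since $\chi_aw$ does not vanish at $a$ and Lemma~\ref{lem:weighted-noncritical-estimate} requires $H^1_0$. There we simply drop the nonnegative kinetic term:
\[
E[\chi_aw]\ge\int c\,\chi_a^2w^2\ge(c(a)-\varepsilon)\|\chi_aw\|^2,
\]
and similarly at $b$. This crude bound suffices because only $c(a)$ is needed at the endpoint.

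Summing the four pieces, using $\sum\|\chi_iw\|^2=\|w\|^2$, and dividing by $\|w\|^2$ gives
\[
\lambda(d)\ge\min\{c(a),c(b),c(x_0)+\mu_\alpha(q_-,q_+)\}-C\varepsilon-C_\delta d .
\]
Letting $d\to0$ and then $\varepsilon\to0$ completes the argument.
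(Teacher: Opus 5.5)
Your proposal is correct and follows the paper's proof. Both use IMS localization with a quadratic partition of unity, drop the kinetic term at the endpoints, apply Lemma~\ref{lem:lower-x0} near $x_0$ and Lemma~\ref{lem:weighted-noncritical-estimate} where $|m'|$ is bounded below, and send the $O(d)$ localization error to zero before $\varepsilon\to0$. Two points you make explicit that the paper leaves implicit are that the localization radius around $x_0$ must be taken from Lemma~\ref{lem:lower-x0}, and that $\mu_\alpha(q_+,q_-)=\mu_\alpha(q_-,q_+)$ by the reflection $y\mapsto -y$ (the paper's proof works with $\mu_\alpha(q_-,q_+)$ throughout).
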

\begin{proof}
    Let $w$ be the principal eigenfunction of  the principal eigenvalue $\lambda(d)$ normalized by \(\int_a^b w^2\,dx =1\).
 Choose \(R>0\)  small such that
 \[
c(x)\ge c(a)-\varepsilon,
\quad a\le x<a+R,
\quad \text{and}\quad
c(x)\ge c(b)-\varepsilon,
\quad b-R<x\le b.
\]
 and $((a,a+R)\cup(b-R,b))\cap B(x_0,R)=\emptyset$ 
where $B(x_0,R)$=$(x_0-R,x_0+R)$.

Set
\[
I_1:=\left(a+\frac{R}{2},b-\frac{R}{2}\right)
\setminus
\overline{B(x_0,R/2)}.
\]
Let $I_a:=[a,a+R),
I_b:=(b-R,b]
$.
 Then 
\[
\left\{
I_a,I_b,I_1,
B(x_0,R)
\right\},
\]
is an open covering of \([a,b]\),where the openness is understood
in the relative topology of \([a,b]\).

Let
$\{\zeta_a^2,\zeta_b^2,\zeta_1^2,\zeta_0^2,\}$
be a smooth partition of unity subordinate to this covering. Thus
\(\zeta_i\in C^\infty([a,b])\), \(0\leq \zeta_i\leq 1\) for $i=0,1$ and
\[
\zeta_a^2(x)+\zeta_b^2(x)+ \zeta_1^2(x)+\zeta_0^2(x)=1,
\quad \forall x\in [a,b],
\]
with
\[
\operatorname{supp}\zeta_a\subset[a,a+R),\quad
\operatorname{supp}\zeta_b\subset(b-R,b],\quad
\operatorname{supp} \zeta_1\subset I_1,
\quad
\operatorname{supp} \zeta_0\subset B(x_0,R),
\]
Moreover, the functions can be chosen so that
\[
|\zeta_0'|,|\zeta_1'|\leq \frac{C}{R},
\]
for some constant \(C>0\) independent of \(R\).  Each point \(x\in [a,b]\) belongs to at most two sets in the covering; consequently, at most two of the functions \(\zeta_i\) ($i=a,b,0,1$) are nonzero at any point. Hence,
\begin{equation}\label{gradient bound}
(\zeta_a'(x))^2+(\zeta_b'(x))^2+ |\zeta_1(x)'|^2+|\zeta_0(x)'|^2
\leq\frac{C}{R^2},
\quad \forall x\in [a,b].
\end{equation}

For $W_a=\zeta_aw$ and $W_b=\zeta_bw$, we have the following estimate:
\begin{align}
   & E[W_a]
\ge
\int_a^b c(x)W_a^2\,dx
\ge
(c(a)-\varepsilon)\int_a^b W_a^2\,dx\\
 & E[W_b]
\ge
\int_a^b c(x)W_b^2\,dx
\ge
(c(b)-\varepsilon)\int_a^b W_b^2\,dx.
\end{align}

For \(W_0 = \zeta_0 w\), we have $W_0\in H_0^1(B(x_0,R))$.
Thanks to Lemma \ref{lem:lower-x0}, we obtain
\[
E(W_0) \ge (\mu_\alpha(q_-,q_+) +c(x_0)-C\varepsilon)\int_{a}^{b}W_0^2(x)\,dx.
\]
Moreover, define $W_1 = \zeta_1 w$, by direct calculus, we have the following equality:
\begin{align}
  &E(W_a)+E(W_b)+   E(W_1)+   E(W_0)\nonumber\\
  =& \int_a^b d(\zeta_a'^2+\zeta_b'^2+\zeta_1'^2+\zeta_0'^2)w^2+(\zeta_a^2+\zeta_b^2+\zeta_1^2+\zeta_0^2)(d\left|w'-\frac{\alpha}{d}m'w\right|^2+cw^2)\,dx\nonumber\\
  &+\int_a^b d[(\zeta_a^2)'+(\zeta_b^2)'+(\zeta_1^2)'+(\zeta_0^2)']\left(w'-\frac{\alpha}{d}m'w\right)w\,dx\nonumber\\
     =&\int_a^b d(\zeta_a'^2+\zeta_b'^2+ \zeta_1'^2+ \zeta_0'^2)w^2\,dx+E[w].
\end{align}
where we use $\zeta_a^2+\zeta_b^2+ \zeta_1^2+ \zeta_0^2=1$ and $((\zeta_a^2)'+(\zeta_b^2)'+(\zeta_1^2)'+(\zeta_0^2)')=(\zeta_a^2+\zeta_b^2+\zeta_1^2+\zeta_0^2)'=0$.

Next, we give the estimate of  $E[W_1]$ with $W_1=\zeta_1w$. We
 note that on $I_1$, there is a constant $\delta>0$ such that $|m'(x)|>\delta$.  Thanks to Lemma \ref{lem:weighted-noncritical-estimate}, by dividing  $I_1$ into several finite intervals, we have 
\begin{align}
 E[W_1]\ge \left(\frac{\alpha^2\delta^2}{d} +c_*\right)\int_a^bW_1^2\,dx.
\end{align}

Therefore, we note that $w$ is the principal eigenfunction with $\int_a^b w^2\, dx=1$, and then we have
\begin{align}\label{111}
 \lambda(d)&= E[w]=  E(W_a)+E(W_b)+    E(W_1)+    E(W_0)-\int_a^b d(\zeta_a'^2+\zeta_b'^2+\zeta_1'^2+\zeta_0'^2)w^2\,dx\nonumber\\
&\ge  (c(a)-\varepsilon)\int_a^b W_a^2\,dx+ (c(b)-\varepsilon)\int_a^b W_b^2\,dx+(\mu_\alpha(q_-,q_+) +c(x_0)-C\varepsilon)\int_{a}^{b}W_0^2\,dx\nonumber\\
&+ E[W_1]-\int_a^b d(\zeta_a'^2+\zeta_b'^2+\zeta_1'^2+\zeta_0'^2)w^2\,dx\nonumber\\
&\ge  (c(a)-\varepsilon)\int_a^b W_a^2\,dx+ (c(b)-\varepsilon)\int_a^b W_b^2\,dx+(\mu_\alpha(q_-,q_+) +c(x_0)-C\varepsilon)\int_{a}^{b}W_0^2\,dx\nonumber\\
&+ \left(\frac{\alpha^2\delta^2}{d} +c_*\right)\int_a^bW_1^2\,dx-\frac{Cd}{R^2}\int_a^b w^2\,dx\nonumber\\
&\ge(\Lambda-C\varepsilon) \int_a^b \left(W_a^2+W_b^2+ W_1^2+ W_0^2\right)\,dx-\frac{Cd}{R^2}\nonumber\\
&=(\Lambda-C\varepsilon) \int_a^b w^2\,dx-\frac{Cd}{R^2},
\end{align}
where $\Lambda=\min\{c(a), c(b),\mu_\alpha(q_-,q_+) +c(x_0)\}$.
In the last inequality, we choose $d$ sufficiently small such that $\left(\frac{\alpha^2\delta^2}{d} +c_*\right)>\Lambda$, and we use the fact that $w^2=W_a^2+W_b^2+W_1^2+W_0^2$. Sending $d\to 0$ first and then $\varepsilon\to 0$, we obtain the lower bound estimate of $\lambda(d)$.
\end{proof}

 \textbf{Proof of (1) of Theorem \ref{main}}: Combining with Lemma \ref{lem:upper-bound} and Lemma \ref{lem:lowr-bound}, we finish (1) of Theorem \ref{main}.

\section{Proof of  (2) of Theorem \ref{main}}
In this section, we construct the counterexample in the interval $(-1,1)$, and complete the proof of  (2) of Theorem \ref{main}.

Let \(I=(-1,1)\). For \(m\in C^1(\overline I)\), define
\[
\lambda(d,m)
=
\inf_{\phi\in H^1(I),\,\phi\not\equiv0}
\frac{
\int_I e^{\frac{2\alpha}{d}m(x)}
\left(d|\phi'(x)|^2+c(x)\phi^2(x)\right)\,dx
}{ \int_I e^{\frac{2\alpha}{d}m(x)}\phi^2(x)\,dx
}.
\]
Assume that
\[
c_*=\min_{\overline I}c(x)>0,
\qquad
C^*=\max_{\overline I}c(x).
\]
Before we construct the counterexample, we first give a continuous dependence lemma.
\begin{lemma}\label{cdm}
For every fixed \(d>0\) and every \(m_1,m_2\in C(\overline I)\), we have
\[
\left|\lambda(d,m_1)-\lambda(d,m_2)\right|
\le
C^*
\left(
e^{\frac{4\alpha}{d}\|m_1-m_2\|_{L^\infty(I)}}-1
\right).
\]
\end{lemma}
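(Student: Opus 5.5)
The plan is to compare the two Rayleigh quotients in the weighted form defining $\lambda(d,m)$ test function by test function. Set $\kappa:=\|m_1-m_2\|_{L^\infty(I)}$. Pointwise on $I$,
\[
e^{-\frac{2\alpha}{d}\kappa}\,e^{\frac{2\alpha}{d}m_1(x)}\le e^{\frac{2\alpha}{d}m_2(x)}\le e^{\frac{2\alpha}{d}\kappa}\,e^{\frac{2\alpha}{d}m_1(x)}.
\]
The two weights are therefore comparable up to the factor $e^{\pm 2\alpha\kappa/d}$. The difficulty is that $c$ enters the numerator with a sign that is not controlled by this comparison. To avoid it, I will rewrite the quotient so that the numerator is nonnegative. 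Since $c_*>0$ by assumption, I will not shift $c$. Instead I write, for $\phi\not\equiv0$,
\[
R_m(\phi)=\frac{N_m(\phi)}{D_m(\phi)},\qquad N_m(\phi)=\int_I e^{\frac{2\alpha}{d}m}\bigl(d|\phi'|^2+c\phi^2\bigr)\,dx\ge0,\quad D_m(\phi)=\int_I e^{\frac{2\alpha}{d}m}\phi^2\,dx>0.
\]
Both integrands are nonnegative pointwise, so the weight comparison applies separately to numerator and denominator:
\[
N_{m_2}(\phi)\le e^{\frac{2\alpha\kappa}{d}}N_{m_1}(\phi),\qquad D_{m_2}(\phi)\ge e^{-\frac{2\alpha\kappa}{d}}D_{m_1}(\phi).
\]
Hence $R_{m_2}(\phi)\le e^{4\alpha\kappa/d}R_{m_1}(\phi)$ for every admissible $\phi$. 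Taking the infimum over $\phi$ gives $\lambda(d,m_2)\le e^{4\alpha\kappa/d}\lambda(d,m_1)$, and by symmetry $\lambda(d,m_1)\le e^{4\alpha\kappa/d}\lambda(d,m_2)$.

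Next I convert this multiplicative bound into the stated additive one. This needs an a priori bound $\lambda(d,m)\le C^*$, which follows by taking $\phi\equiv1$: then $R_m(1)=\int e^{2\alpha m/d}c\big/\int e^{2\alpha m/d}\le C^*$. We also have $\lambda(d,m)\ge c_*>0$, so both eigenvalues lie in $[c_*,C^*]$. Assume without loss of generality that $\lambda(d,m_2)\ge\lambda(d,m_1)$. Then
\[
0\le\lambda(d,m_2)-\lambda(d,m_1)\le\bigl(e^{\frac{4\alpha\kappa}{d}}-1\bigr)\lambda(d,m_1)\le C^*\bigl(e^{\frac{4\alpha\kappa}{d}}-1\bigr),
\]
which is the claim.

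The only point requiring care is this positivity issue. If $c$ could change sign, the numerator could be negative and the weight comparison would run the wrong way on the $c\phi^2$ term. That case would require shifting $c$ by a constant and would change the constant in the bound. Here it is avoided precisely by the standing assumption $c_*>0$. A minor technical point is that $m_i$ is only assumed continuous. This causes no difficulty, since the weights are bounded, positive and continuous, so the variational formula and the infimum over $H^1(I)$ make sense as written.
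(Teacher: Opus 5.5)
Your proof is correct and follows the paper's argument: compare the weights $e^{\frac{2\alpha}{d}m_i}$ pointwise to get $\lambda(d,m_2)\le e^{4\alpha\kappa/d}\lambda(d,m_1)$ and symmetrically, then use the constant test function to get $\lambda(d,m)\le C^*$ and convert the multiplicative bound into the additive one. You also state explicitly that the numerator comparison needs $c_*>0$, so that $d|\phi'|^2+c\phi^2\ge0$; the paper uses this only implicitly.
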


\begin{proof}
Set
\[
\delta=\|m_1-m_2\|_{L^\infty(I)}.
\]
Then
\[
e^{-\frac{2\alpha}{d}\delta}
e^{\frac{2\alpha}{d}m_2}
\le
e^{\frac{2\alpha}{d}m_1}
\le
e^{\frac{2\alpha}{d}\delta}
e^{\frac{2\alpha}{d}m_2}.
\]
Hence, for every \(\phi\in H^1(I)\), \(\phi\not\equiv0\),
\[
\frac{
\int_I e^{\frac{2\alpha}{d}m_1}
\left(d|\phi'|^2+c\phi^2\right)\,dx
}{
\int_I e^{\frac{2\alpha}{d}m_1}\phi^2\,dx
}
\le
e^{\frac{4\alpha}{d}\delta}
\frac{
\int_I e^{\frac{2\alpha}{d}m_2}
\left(d|\phi'|^2+c\phi^2\right)\,dx
}{
\int_I e^{\frac{2\alpha}{d}m_2}\phi^2\,dx
}.
\]
Taking the infimum over \(\phi\) gives
\[
\lambda(d,m_1)\le
e^{\frac{4\alpha}{d}\delta}\lambda(d,m_2).
\]
Since the constant test function gives
\[
\lambda(d,m_2)\le C^*,
\]
we obtain
\[
\lambda(d,m_1)-\lambda(d,m_2)
\le
C^*
\left(
e^{\frac{4\alpha}{d}\delta}-1
\right).
\]
Interchanging \(m_1\) and \(m_2\) yields the desired estimate.
\end{proof}

\subsection{The construction of the counterexample}
To construct a counterexample for which the principal eigenvalue fails to converge, we impose the following assumption on \(c\) in order to exclude possible contributions from the boundary points.
\begin{assumption}\label{assc}
The function $c$ satisfies
\begin{equation*}\label{assc}
    c(-1),\,c(1)>c(0)+9\alpha.
\end{equation*}
\end{assumption} 
We now construct the counterexample such that the corresponding principal eigenvalue does not converge as $d\to 0$.   In the following, we only construct the part  on \([0,1]\), and we extend it evenly to \([-1,1]\).

\textbf{Step 1.}
Let
\[
m_1(x)=x^2.
\]
By the known small-diffusion limit and (1) of Theorem \ref{main} under Assumption \ref{assc} for $c(x)$,
\[
\lim_{d\to0}\lambda(d,m_1)=c(0)+4\alpha.
\]
Choose \(0<d_1<\frac{1}{4\alpha}\ln(1+\frac{1}{2C^*})\)  such that
\[
\left|\lambda(d_1,m_1)-\bigl(c(0)+4\alpha\bigr)\right|<\frac12
\]

Set
\[
\tau_1=d_1^2.
\]
Since \(m_1(x)=x^2\), we have
\[
|m_1(x)|\le \tau_1,
\quad 0\le x\le d_1.
\]

Define \(m_2\) by
\[
m_2(x)=
\begin{cases}
2x^2, & 0\le x\le \frac{d_1}{3},\\
P_{1}(x), & \frac{d_1}{3}<x<d_1,\\
m_1(x), & d_1\le x\le 1,
\end{cases}
\]
where
\[
P_{1 }(x)
=
\frac{2}{9}d_1^2
+
\int_{d_1/3}^{x}
\left(
\frac{27}{4d_1}t^2-8t+\frac{13}{4}d_1
\right)\,dt.
\]
Then we have
\[
P_{1 }\left(\frac{d_1}{3}\right)=\frac{2}{9}d_1^2,
\
P_{1 }(d_1)=d_1^2,\
P_{1 }'\left(\frac{d_1}{3}\right)=\frac{4}{3}d_1,
\
P_{1}'(d_1)=2d_1, 
\]
Moreover, we have $P_{1}'(x)>0$ and $|P_{1}''(x)|\le \frac{11}{2},\ \frac{d_1}{3}<x<d_1.$
Therefore we have $m_2(x)\in C^1[0,1]$ and  $\text{Lip}(m_2'(x))\le \frac{11}{2}$. Then we obtain 
\[
m_2\in C^{1,1}([0,1]).
\] 
Moreover,
\[
\|m_2-m_1\|_{L^\infty(0,1)}\le d_1^2.
\]
By Lemma \ref{cdm} and \(0<d_1<\frac{1}{4\alpha}\ln(1+\frac{1}{2C^*})\), we have
\[
\left|\lambda(d_1,m_2)-\lambda(d_1,m_1)\right|
\le
C^*\left(e^{4\alpha d_1}-1\right)
<\frac12.
\]
Hence
\[
\left|\lambda(d_1,m_2)-\bigl(c(0)+4\alpha\bigr)\right|<1.
\]

\textbf{Step 2.}
For  \(m_2(x)\), (1) of Theorem \ref{main} and Assumption \ref{assc} gives
\[
\lim_{d\to0}\lambda(d,m_2)=c(0)+8\alpha.
\]
Choose \(0<d_2<\min\{\frac{\sqrt2}{3}d_1,
\frac{1}{4\alpha}\ln(1+\frac{1}{3C^*})\}\) such that
\[
\left|\lambda(d_2,m_2)-\bigl(c(0)+8\alpha\bigr)\right|<\frac13,
\]

Set
\[
\tau_2=d_2^2.
\]
Since \(m_2(x)=2x^2\) near \(0\), we have
\[
m_2(x)\le d_2^2,
\quad
0\le x\le \frac{d_2}{\sqrt2}.
\]

Define \(m_3\) by
\[
m_3(x)=
\begin{cases}
x^2, & 0\le x\le \dfrac{d_2}{3\sqrt2},\\
P_2(x), & \frac{d_2}{3\sqrt2}<x<\frac{d_2}{\sqrt2},\\
m_2(x), & \dfrac{d_2}{\sqrt2}\le x\le 1,
\end{cases}
\]
where
\[
P_2(x)
=
\frac{d_2^2}{18}
+
\int_{d_2/(3\sqrt2)}^{x}
\left(
-\frac{27}{2\sqrt2d_2}t^2
+14t
-\frac{13}{4}\frac{d_2}{\sqrt2}
\right)\,dt.
\]
Then
\[
P_2\left(\frac{d_2}{3\sqrt2}\right)=\frac{d_2^2}{18},
\quad
P_2\left(\frac{d_2}{\sqrt2}\right)=d_2^2,\quad
P_2'\left(\frac{d_2}{3\sqrt2}\right)=\frac{2d_2}{3\sqrt2},
\quad
P_2'\left(\frac{d_2}{\sqrt2}\right)=\frac{4d_2}{\sqrt2}.
\]
Moreover, we have $P_2'(x)>0$ and $|P_2''(x)|\le \frac{19}{2}$ for $\frac{d_2}{3\sqrt2}<x<\frac{d_2}{\sqrt2}$.
Therefore
\[
m_3\in C^{1,1}([0,1]).
\]
 Moreover,
\[
\|m_3-m_2\|_{L^\infty(0,1)}\le d_2^2.
\]
Thus, due to Lemma \ref{cdm}, we have
\[
\left|\lambda(d_2,m_3)-\lambda(d_2,m_2)\right|
\le
C^*\left(e^{4\alpha d_2}-1\right)
<\frac13.
\]
Consequently,
\[
\left|\lambda(d_2,m_3)-\bigl(c(0)+8\alpha\bigr)\right|
<\frac23.
\]

We now continue the above construction inductively.

Suppose first that $m_{2n-1}\in C^{1,1}$ has been constructed. By the previous
step, there exists a neighborhood of $0$ in which
\[
m_{2n-1}(x)=x^2.
\]

\textbf{Step $2n-1$.}
We have
\[
m_{2n-1}(x)=x^2,
\quad
0\le x\le \frac{d_{2n-2}}{3\sqrt2}.
\]
By (1) of Theorem \ref{main} and Assumption \ref{assc}, we have
\[
\lim_{d\to0}\lambda(d,m_{2n-1})
=
c(0)+4\alpha.
\]
Choose
\[
0<d_{2n-1}
<
\min\left\{
\frac{d_{2n-2}}{3\sqrt2},
\frac{1}{4\alpha}
\ln\left(1+\frac{1}{2nC^*}\right)
\right\}
\]
such that
\[
\left|
\lambda(d_{2n-1},m_{2n-1})
-
\bigl(c(0)+4\alpha\bigr)
\right|
<
\frac{1}{2n}.
\]

Set
\[
\tau_{2n-1}=d_{2n-1}^2,
\]
we have
\[
m_{2n-1}(x)=x^2\le d_{2n-1}^2,
\quad
0\le x\le d_{2n-1}<\frac{d_{2n-2}}{3\sqrt2}.
\]

Define \(m_{2n}\) by
\[
m_{2n}(x)=
\begin{cases}
2x^2, & 0\le x\le \frac{d_{2n-1}}{3},\\
P_{2n-1}(x), & \frac{d_{2n-1}}{3}<x<d_{2n-1},\\
m_{2n-1}(x), & d_{2n-1}\le x\le 1,
\end{cases}
\]
where
\[
P_{{2n-1}}(x)
=
\frac{2}{9}d_{2n-1}^2
+
\int_{d_{2n-1}/3}^{x}
\left(
\frac{27}{4d_{2n-1}}t^2-8t+\frac{13}{4}d_{2n-1}
\right)\,dt.
\]
Then we have
\[
P_{{2n-1}}\left(\frac{d_{2n-1}}{3}\right)=\frac{2}{9}d_{2n-1}^2,
\
P_{{2n-1}}(d_{2n-1})=d_{2n-1}^2,\]
\[
P_{{2n-1}}'\left(\frac{d_{2n-1}}{3}\right)=\frac{4}{3}d_{2n-1},
\
P_{{2n-1}}'(d_{2n-1})=2d_{2n-1}, 
\]
Moreover, we have $|P_{{2n-1}}''(x)|\le \frac{11}{2},\ \frac{d_{2n-1}}{3}<x<d_{2n-1}.$
Therefore we have $m_{2n}'(x)\in C[0,1]$ and  $\text{Lip}(m_{2n}'(x))\le \frac{19}{2}$. Then we obtain 
\[
m_{2n}\in C^{1,1}([0,1]).
\] 
Moreover,
\[
\|m_{2n}-m_{2n-1}\|_{L^\infty(0,1)}\le d_{2n-1}^2.
\]
By Lemma \ref{cdm}, we have
\[
\left|\lambda(d_{2n-1},m_{2n})-\lambda(d_{2n-1},m_{2n-1})\right|
\le
C^*\left(e^{4\alpha d_{2n-1}}-1\right)
<\frac{1 }{2n}.
\]
Hence
\begin{align}\label{odd}
\left|\lambda(d_{2n-1},m_{2n})-\bigl(c(0)+4\alpha\bigr)\right|<\frac1n.
\end{align}

\textbf{Step $2n$.}
Since
\[
m_{2n}(x)=2x^2,
\quad
0\le x\le \frac{d_{2n-1}}3,
\]
(1) of Theorem \ref{main}  and Assumption \ref{assc} gives
\[
\lim_{d\to0}\lambda(d,m_{2n})
=
c(0)+8\alpha.
\]
Choose
\[
0<d_{2n}
<
\min\left\{
\frac{\sqrt2}{3}d_{2n-1},
\frac{1}{4\alpha}
\ln\left(1+\frac{1}{(2n+1)C^*}\right)
\right\}
\]
such that
\[
\left|
\lambda(d_{2n},m_{2n})
-\bigl(c(0)+8\alpha\bigr)
\right|
<
\frac{1}{2n+1}.
\]

Set
\[
\tau_{2n}=d_{2n}^2,
\]
we have
\[
m_{2n}(x)=2x^2\le d_{2n}^2,
\quad
0\le x\le\frac{d_{2n}}{\sqrt2}< \frac{d_{2n-1}}{3}.
\]

Define
\[
m_{2n+1}(x)=
\begin{cases}
x^2,
&0\le x\le \dfrac{d_{2n}}{3\sqrt2},\\[1ex]
P_{2n}(x),
&\dfrac{d_{2n}}{3\sqrt2}
 <x<
 \dfrac{d_{2n}}{\sqrt2},\\[1ex]
m_{2n}(x),
&\dfrac{d_{2n}}{\sqrt2}\le x\le1,
\end{cases}
\]
where
\[
P_{2n}(x)
=
\frac{d_{2n}^2}{18}
+
\int_{d_{2n}/(3\sqrt2)}^x
\left(
-\frac{27}{2\sqrt2d_{2n}}t^2
+14t
-\frac{13d_{2n}}{4\sqrt2}
\right)\,dt.
\]
A direct calculation gives
\[
P_{2n}\left(\frac{d_{2n}}{3\sqrt2}\right)
=
\frac{d_{2n}^2}{18},
\quad
P_{2n}\left(\frac{d_{2n}}{\sqrt2}\right)
=
d_{2n}^2,
\]
and
\[
P_{2n}'\left(\frac{d_{2n}}{3\sqrt2}\right)
=
\frac{2d_{2n}}{3\sqrt2},
\quad
P_{2n}'\left(\frac{d_{2n}}{\sqrt2}\right)
=
\frac{4d_{2n}}{\sqrt2}.
\]
Therefore the values and first derivatives match at both
junction points.

In particular, \(P_{2n}'(x)>0\) on the connecting interval.
Consequently,
\[
m_{2n+1}\in C^{1,1}([0,1]),
\quad
\operatorname{Lip}(m_{2n+1}')\le\frac{19}{2}.
\]
Furthermore,
\[
m_{2n+1}(x)=x^2,
\quad
0\le x\le\frac{d_{2n}}{3\sqrt2},
\]
which provides the required starting property for the next
inductive step.

Since  we have
\[
\|m_{2n+1}-m_{2n}\|_{L^\infty(0,1)}
\le d_{2n}^2.
\]
It follows from Lemma~\ref{cdm} that
\[
\begin{aligned}
&\left|
\lambda(d_{2n},m_{2n+1})
-\lambda(d_{2n},m_{2n})
\right|\le
C^*\left(e^{4\alpha d_{2n}}-1\right)
<
\frac{1}{2n+1}.
\end{aligned}
\]
Consequently,
\begin{align}\label{even}
   \left|
\lambda(d_{2n},m_{2n+1})
-\bigl(c(0)+8\alpha\bigr)
\right|
<
\frac{2}{2n+1}. 
\end{align}

Under this construction, we obtain the function sequence
\(\{m_n\}_{n=1}^{+\infty}\).
Moreover, by construction, for each \(n\), the function \(m_n\) satisfies
\[
m_n(0)=m_n'(0)=0,
\quad
\operatorname{Lip}(m_n')\le L:=\frac{19}{2},
\]
and
\begin{equation}\label{eq:mn-difference}
\operatorname{supp}(m_{n+1}-m_n)\subset[0,d_n],
\quad
\|m_{n+1}-m_n\|_{L^\infty(0,1)}
\le d_n^2.
\end{equation}
Here, \(\{d_n\}_{n=1}^{+\infty}\) satisfies
$
d_{2n}<\frac{\sqrt2}{3}d_{2n-1},
d_{2n+1}<\frac{1}{3\sqrt2}d_{2n},
 n\ge1.
$
Since
$
\frac{1}{3\sqrt2}<\frac{\sqrt2}{3},
$
we obtain
\begin{equation}\label{Dn-bound}
d_n<\frac{\sqrt2}{3}d_{n-1},
\quad n\ge2.
\end{equation}

\subsection{Proof of (2) of Theorem \ref{main}}

In this subsection, we prove that the limit function \(m^*\) of the
sequence \(\{m_n\}_{n=1}^{+\infty}\) constructed above is the desired
counterexample. The corresponding principal eigenvalue does not
converge as \(d\to0\).

\begin{proposition}\label{prop:limit-potential}
There exists \(m^*\in C^{1,1}([-1,1])\) such that
\[
m_n\to m^*
\quad\text{in }C^1([-1,1]).
\]
Moreover, $m^*$ has only one   minimal critical point.
\end{proposition}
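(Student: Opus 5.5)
We prove convergence by showing that $\{m_n\}$ is Cauchy in $C^1$, pass the Lipschitz bound on the derivatives to the limit, and then locate the critical points of $m^*$ by checking the sign of $(m^*)'$ on $(0,1]$. All $m_n$ are even, so it suffices to work on $[0,1]$; evenness and $m_n'(0)=0$ make $m_n'$ odd and continuous at $0$, so the statements transfer to $[-1,1]$.

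\textbf{Step 1: Cauchy in $C^1$.} By \eqref{eq:mn-difference}, $\|m_{n+1}-m_n\|_{L^\infty}\le d_n^2$. For the derivatives, note that $m_{n+1}-m_n$ is supported in $[0,d_n]$, vanishes at $0$ together with its derivative, and $\operatorname{Lip}((m_{n+1}-m_n)')\le 2L$. Hence
\[
|(m_{n+1}-m_n)'(x)|\le 2L\,x\le 2L\,d_n
\quad\text{on }[0,d_n],
\]
and $(m_{n+1}-m_n)'=0$ elsewhere. By \eqref{Dn-bound}, $d_n\le(\sqrt2/3)^{n-1}d_1$, so both $\sum d_n^2$ and $\sum d_n$ converge geometrically. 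Therefore $m_n\to m^*$ uniformly and $m_n'\to g$ uniformly for some continuous $g$. The standard theorem on uniform convergence of derivatives then gives $m^*\in C^1$ with $(m^*)'=g$.

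\textbf{Step 2: the $C^{1,1}$ bound.} For every $n$ we have $|m_n'(x)-m_n'(y)|\le L|x-y|$. Passing to the pointwise limit gives $\operatorname{Lip}((m^*)')\le L=19/2$, so $m^*\in C^{1,1}([-1,1])$. We also get $m^*(0)=(m^*)'(0)=0$.

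\textbf{Step 3: the only critical point is a strict minimum at $0$.} This is the main obstacle, because the limit could a priori create new zeros of $(m^*)'$ where the modifications accumulate. The key observation is that the modifications are nested and stabilize. Fix $x\in(0,1]$. Since $d_n\to0$, there is $N$ with $d_n<x$ for all $n\ge N$. As $m_{n+1}-m_n$ is supported in $[0,d_n]$, we get $m_n=m_N$ on $(d_N,1]$ for all $n\ge N$, hence $m^*=m_N$ near $x$.

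It therefore suffices to show $m_N'>0$ on $(0,1]$ for every $N$. We argue by induction. The base case $m_1'(x)=2x>0$ holds. For the inductive step, $m_{N+1}'$ on $(0,1]$ is one of three things:
\begin{itemize}[label={}]
\item \emph{either} $2x$ or $4x$ on the inner piece, which is positive;
\item \emph{or} $P_k'$ on the connecting interval;
\item \emph{or} equal to $m_N'$ on the outer piece.
\end{itemize}

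The work lies in checking $P_k'>0$ on the connecting intervals. Each $P_k'$ is a quadratic in $t$, so we compare its endpoint values with the interior extremum.

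For $P_{2n-1}'$ with leading coefficient $27/(4d)$, the vertex is at $t=16d/27$, which lies in $(d/3,d)$. Its minimum value is
\[
\frac{13}{4}d-\frac{64}{27}d>0.
\]

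For $P_{2n}'$ the quadratic is concave, so its minimum on the interval is attained at an endpoint. Both endpoint values, $2d/(3\sqrt2)$ and $4d/\sqrt2$, are positive.

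Consequently $(m^*)'>0$ on $(0,1]$, and by oddness $(m^*)'<0$ on $[-1,0)$. Hence $0$ is the unique critical point and a strict minimum. Moreover $(m^*)'(\pm1)=m_1'(\pm1)=\pm2$, so \textbf{(A)} holds.
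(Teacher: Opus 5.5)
Your proposal is correct and follows the paper's proof essentially step for step: Cauchy in $C^1$ via $\operatorname{supp}(m_{n+1}-m_n)\subset[0,d_n]$ and the Lipschitz bound $|(m_{n+1}-m_n)'|\le 2Ld_n$, passing $\operatorname{Lip}(m_n')\le L$ to the limit, and the stabilization $m^*=m_N$ on $(d_N,1]$ to get $(m^*)'=m_N'>0$. The one addition is your explicit check that $P_k'>0$ on the connecting intervals, which the paper only asserts: for odd steps the vertex is at $16d/27$ with minimum value $\frac{95}{108}d>0$, and for even steps you use concavity together with positive endpoint values.
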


\begin{proof}
We divide the proof into two steps.

\medskip
\noindent
\textbf{Step 1. Convergence in \(C^1([0,1])\).}

We note that \(m_{n+1}\) and \(m_n\) coincide outside
\([0,d_n]\). Since
$m_n'(0)=m_{n+1}'(0)=0$
and
$
\operatorname{Lip}(m_n'),
\operatorname{Lip}(m_{n+1}')\le L,
$
we obtain, for \(0<x<d_n\),
\[
\begin{aligned}
|m_{n+1}'(x)-m_n'(x)|
\le |m_{n+1}'(x)-m_{n+1}'(0)|
   +|m_n'(x)-m_n'(0)| \le 2Ld_n.
\end{aligned}
\]
Hence
\begin{equation}\label{eq:derivative-difference}
\|m_{n+1}'-m_n'\|_{L^\infty(0,1)}
\le 2Ld_n.
\end{equation}

By \eqref{Dn-bound}, we
have
\[
\sum_{n=1}^{+\infty}d_n^2<+\infty,
\quad
\sum_{n=1}^{+\infty}d_n<+\infty.
\]
Combining \eqref{eq:mn-difference} and
\eqref{eq:derivative-difference}, we obtain, for \(q>p\),
\begin{equation}\label{cha1}
\|m_q-m_p\|_{C^1([0,1])}
\le
\sum_{n=p}^{q-1}
\|m_{n+1}-m_n\|_{C^1([0,1])} \notag\le
\sum_{n=p}^{q-1}\left(d_n^2+2Ld_n\right).
\end{equation}
Therefore, the right-hand side tends to zero as
\(p\to\infty\). Thus, \(\{m_n\}\) is a Cauchy sequence in
\(C^1([0,1])\), and there exists \(m^*\in C^1([0,1])\) such that
\begin{equation*}
m_n\to m^*
\quad\text{in }C^1([0,1]).
\end{equation*}


\medskip
\noindent
\textbf{Step 2. The limit \(m^*\) belongs to \(C^{1,1}\).}

For every \(x,y\in[0,1]\), we have
\[
|m_n'(x)-m_n'(y)|
\le L|x-y|.
\]
Passing to the limit, thanks to Step 1, we obtain
\[
|(m^*)'(x)-(m^*)'(y)|
\le L|x-y|.
\]
Therefore,
\[
(m^*)'\in C^{0,1}([0,1]),
\quad
\operatorname{Lip}((m^*)')\le L,
\]
and hence
\[
m^*\in C^{1,1}([0,1]).
\]
Moreover, \(m_n'(0)=0\) for every \(n\), so
$(m^*)'(0)=0.$

We extend \(m^*\) evenly to \([-1,1]\) by setting
\[
m^*(x):=m^*(-x),
\quad -1\le x<0.
\]
Its derivative is given by
\[
(m^*)'(x)=
\begin{cases}
(m^*)'(x),&x>0,\\
0,&x=0,\\
-(m^*)'(-x),&x<0.
\end{cases}
\]
If \(x<0<y\), then
\[
\begin{aligned}
|(m^*)'(x)-(m^*)'(y)|
\le |(m^*)'(-x)|+|(m^*)'(y)|\le L(-x)+Ly=L|x-y|.
\end{aligned}
\]
Together with the same-side estimates, this proves that the even
extension satisfies
\[
m^*\in C^{1,1}([-1,1]),
\quad
\operatorname{Lip}(m^*)'\le L.
\]

It suffices to verify that $m^*$ has only one  minimal critical point.
For every fixed $x\in(0,1]$, choose $N$ so large that $d_N<x$.
Since $d_n$ is decreasing and
$m_{n+1}=m_n$ on $(d_n,1]$, all subsequent modifications
leave a neighborhood of $x$ unchanged. Hence
$
(m^*)'(x)=m_N'(x)>0.
$
By evenness, $(m^*)'(x)<0$ for $x\in[-1,0)$.
Together with $(m^*)'(0)=0$, this shows that $0$ is the
unique critical point and a strict minimum of $m^*$.
Moreover, $(m^*)'(-1)=-2$ and $(m^*)'(1)=2$.
Thus $m^*$ satisfies \textbf{\textup{(A)}} on $[-1,1]$.
Moreover, $m^*$ has only one local minimum point.
\end{proof}

\textbf{Proof of (2) of Theorem \ref{main}.}
It suffices for us to prove 
\[
\lim_{n\to\infty}\lambda(d_{2n-1},m^*)
=
c(0)+4\alpha,
\quad
\lim_{n\to\infty}\lambda(d_{2n},m^*)
=
c(0)+8\alpha.
\]

We first estimate the difference between \(m^*\) and the potential
present at the \(k\)-th stage. From \eqref{Dn-bound},
\[
\|m^*-m_{k+1}\|_{L^\infty(0,1)}
\le
\sum_{j=k+1}^\infty d_j^2\le d_k^2\sum_{j=1}^\infty \left(\frac{2}{9}\right)^j\le \frac{2d_k^2}{7}.
\]

Applying Lemma~\ref{cdm} with \(d=d_k\), we obtain
\[
\begin{aligned}
\left|
\lambda(d_k,m^*)-\lambda(d_k,m_{k+1})
\right|\le
C^*
\left(
e^{\left\{
\frac{4\alpha}{d_k}
\|m^*-m_{k+1}\|_{L^\infty(-1,1)}
\right\}}-1
\right)\le
C^*
\left(
e^{\frac{8\alpha}{7}d_k}-1
\right).
\end{aligned}
\]
Since \(d_k\to0\), it follows that
\begin{equation}\label{eq:eigenvalue-tail}
\left|
\lambda(d_k,m^*)-\lambda(d_k,m_{k+1})
\right|
\to 0.
\end{equation}

For the odd subsequence, \eqref{odd} gives
\[
\left|
\lambda(d_{2n-1},m_{2n})
-\bigl(c(0)+4\alpha\bigr)
\right|
<\frac1n.
\]
Therefore, by \eqref{eq:eigenvalue-tail},
\[
\begin{aligned}
&\left|
\lambda(d_{2n-1},m^*)
-\bigl(c(0)+4\alpha\bigr)
\right|\le
\left|
\lambda(d_{2n-1},m^*)
-\lambda(d_{2n-1},m_{2n})
\right|
+\frac1n.
\end{aligned}
\]
Hence
\[
\lim_{n\to\infty}\lambda(d_{2n-1},m^*)
=
c(0)+4\alpha.
\]

Similarly, for the even subsequence, the construction \eqref{even} gives
\[
\left|
\lambda(d_{2n},m_{2n+1})
-\bigl(c(0)+8\alpha\bigr)
\right|
<
\frac{2}{2n+1}.
\]
Using \eqref{eq:eigenvalue-tail} again, we find
\[
\begin{aligned}
\left|
\lambda(d_{2n},m^*)
-\bigl(c(0)+8\alpha\bigr)
\right|\le
\left|
\lambda(d_{2n},m^*)
-\lambda(d_{2n},m_{2n+1})
\right|
+\frac{2}{2n+1}
\to 0.
\end{aligned}
\]
Therefore,
\[
\lim_{n\to\infty}\lambda(d_{2n},m^*)
=
c(0)+8\alpha.
\]

Since \(\alpha>0\), the two limits are different. 
Consequently, \(\lambda(d,m^*)\) does not converge as \(d\to0\).

\appendix
\section{The principal  eigenvalue problem on $\mathbb{R}$}\label{app}

We shall prove that the variational quantity
$\mu_\alpha(q_-,q_+)$ is attained and is the principal
eigenvalue of the problem
\begin{equation}\label{eq:eigenvalue-problem}
\left\{
\begin{aligned}
&-u''(y)+V(y)u(y)=\mu u(y)
&&\text{for a.e. }y\in\mathbb R,\\
&0<u\in\mathcal Q\cap H^2_{\mathrm{loc}}(\mathbb R).
\end{aligned}
\right.
\end{equation}

We first give a decay property for functions in $\mathcal Q$ for preparation.

\begin{lemma}
\label{lemwd}
If $u\in\mathcal Q$, then
\[
\lim_{|y|\to\infty}|y|\,|u(y)|^2=0.
\]
\end{lemma}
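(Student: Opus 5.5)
The idea is to show that $f:=y u^2$ has a limit as $|y|\to\infty$, and that this limit must vanish because $f$ is not integrable at infinity unless it tends to zero.

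I treat $y\to+\infty$; the case $y\to-\infty$ is symmetric. Since $u\in H^1(\mathbb R)$, $u$ is absolutely continuous. Hence $f$ is locally absolutely continuous, with
\[
f'(y)=u^2+2yuu' \quad\text{a.e.}
\]

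\textbf{Step 1: $f'\in L^1(1,\infty)$.} We have $u^2\in L^1$ because $u\in L^2$. By Cauchy--Schwarz,
\[
\int_1^\infty |2yuu'|\,dy\le 2\|yu\|_{L^2}\|u'\|_{L^2}<\infty,
\]
using $yu\in L^2$ (the definition of $\mathcal Q$) and $u'\in L^2$.

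\textbf{Step 2: $f$ has a limit.} Writing $f(y)=f(1)+\int_1^y f'(s)\,ds$, Step 1 shows that $\ell:=\lim_{y\to\infty}f(y)$ exists and is finite.

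\textbf{Step 3: the limit is zero.} Observe that $f(y)/y=u^2\in L^1(1,\infty)$. If $\ell\neq 0$, then for large $y$ we would have $u^2(y)=f(y)/y\ge \tfrac{|\ell|}{2y}$, which is not integrable at infinity. This contradiction gives $\ell=0$, that is, $y\,u(y)^2\to0$. Note also that $f\ge0$ for $y>0$, so $\ell\ge 0$ anyway.

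For $y\to-\infty$ the same argument applies to $|y|u^2$ on $(-\infty,-1)$, and together these give $\lim_{|y|\to\infty}|y|\,|u(y)|^2=0$.

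The only delicate point is justifying the product rule for $yu^2$ with $u$ merely $H^1$. Since $u$ is absolutely continuous on compact intervals, so are $u^2$ and $yu^2$, and this suffices. No estimate here is a serious obstacle.
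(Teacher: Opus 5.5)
Your proof is correct and follows essentially the same route as the paper. Both arguments write $y u^2$ as an integral of $u^2+2yuu'$, use Cauchy--Schwarz with $yu,u'\in L^2$ to show the limit exists, and then exclude a positive limit by non-integrability; the only difference is that you get the contradiction from $u\in L^2$ (via $u^2\ge \ell/(2y)$), whereas the paper uses $yu\in L^2$ (via $y^2u^2\ge \ell y/2$).
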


\begin{proof}
We first prove that
\[
\lim_{y\to+\infty} y u(y)^2=0.
\]
Since $u\in H^1(\mathbb R)$, we take its locally absolutely
continuous representative. For any $s>t>0$, we have
\[
\begin{aligned}
s u(s)^2-t u(t)^2
=\int_t^s \bigl(yu(y)^2\bigr)'\,dy=\int_t^s u(y)^2\,dy
  +2\int_t^s yu(y)u'(y)\,dy.
\end{aligned}
\]
Therefore, by the Cauchy--Schwarz inequality,
\[
\begin{aligned}
\left|s u(s)^2-t u(t)^2\right|
&\leq
\int_t^s u(y)^2\,dy
+2\int_t^s |yu(y)|\,|u'(y)|\,dy\\
&\leq
\int_t^s u(y)^2\,dy
+2
\left(\int_t^s y^2u(y)^2\,dy\right)^{1/2}
\left(\int_t^s |u'(y)|^2\,dy\right)^{1/2}.
\end{aligned}
\]
Since
\[
u\in L^2(\mathbb R),\quad
yu\in L^2(\mathbb R),\quad
u'\in L^2(\mathbb R),
\]
the right-hand side converges to zero uniformly in $s>t$ as
$t\to+\infty$. Hence $\{yu(y)^2\}_{y>0}$ converge as $y\to+\infty$ due to  the Cauchy Theorem. Consequently, there exists $L\geq0$
such that
\[
\lim_{y\to+\infty}yu(y)^2=L.
\]

We claim that $L=0$. Suppose, to the contrary, that $L>0$.
Then there exists $R>0$ such that
\[
yu(y)^2\geq\frac{L}{2}
\quad\text{for every }y\geq R.
\]
Multiplying this inequality by $y$, we obtain
\[
y^2u(y)^2
\geq
\frac{L}{2}y
\quad\text{for every }y\geq R.
\]
Consequently,
\[
\int_R^\infty y^2u(y)^2\,dy
\geq
\frac{L}{2}\int_R^\infty y\,dy
=+\infty,
\]
which contradicts the assumption $yu\in L^2(\mathbb R)$.
Therefore, we have
$
\lim_{y\to+\infty}yu(y)^2=0.
$ Similarly, we have $
\lim_{y\to-\infty}yu(y)^2=0.
$
\end{proof}

\begin{proposition}
\label{prop:principal-eigenfunction}
Assume that \(\alpha>0\) and \(q_-,q_+> 0\). Then the infimum in
\eqref{def:mu} is attained. More precisely, there exists
$u_1\in\mathcal Q\cap H^2_{\mathrm{loc}}(\mathbb R)$ and
$\|u_1\|_{L^2(\mathbb R)}=1$,
such that
$u_1(y)>0
\text{ for every }y\in\mathbb R$, and
\[
-u_1''+V(y)u_1
=
\mu_\alpha(q_-,q_+)u_1
\quad\text{for a.e. }y\in\mathbb R.
\]
The principal eigenvalue $\mu_\alpha(q_-,q_+)$ is simple, and hence
$u_1$ is unique up to multiplication by a nonzero
constant.
\end{proposition}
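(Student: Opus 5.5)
The plan is the direct method of the calculus of variations on the Hilbert space $\mathcal Q$, with norm $\|u\|_{\mathcal Q}^2=\int_{\mathbb R}(|u'|^2+y^2u^2+u^2)\,dy$. The key tool is the compactness of the embedding $\mathcal Q\hookrightarrow L^2(\mathbb R)$.

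\textbf{Step 1: minimization.} Set $Q[u]=\int_{\mathbb R}(|u'|^2+Vu^2)\,dy$. Since $V(y)\ge \alpha^2q_0^2y^2+\alpha q_0$ with $q_0=\min\{q_-,q_+\}>0$, we have $Q[u]\ge c_0\|u\|_{\mathcal Q}^2$ for some $c_0>0$, and $\mu_\alpha(q_-,q_+)\ge \alpha q_0>0$ is finite.

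Take a minimizing sequence $u_k$ with $\|u_k\|_{L^2}=1$. It is bounded in $\mathcal Q$, so a subsequence converges weakly in $\mathcal Q$ to some $u_1$. To get strong $L^2$ convergence I will prove compactness. On $[-R,R]$ it follows from Rellich. On the tails, $\int_{|y|>R}u_k^2\le R^{-2}\int y^2u_k^2\le C/R^2$, uniformly in $k$. Hence $\|u_1\|_{L^2}=1$, and weak lower semicontinuity of $Q$ (a sum of squared weighted $L^2$ norms) gives $Q[u_1]\le\mu_\alpha$. So $u_1$ is a minimizer.

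\textbf{Step 2: positivity.} Replacing $u_1$ by $|u_1|$ keeps $Q$ unchanged, since $|\,|u|'|=|u'|$ a.e. So we may assume $u_1\ge0$.

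\textbf{Step 3: Euler--Lagrange equation and regularity.} The equation $\int(u_1'\varphi'+Vu_1\varphi)=\mu_\alpha\int u_1\varphi$ holds for all $\varphi\in\mathcal Q$, in particular for $\varphi\in C_c^\infty$. Since $V\in L^\infty_{\rm loc}$, this gives $u_1''=(V-\mu_\alpha)u_1\in L^2_{\rm loc}$. Hence $u_1\in H^2_{\rm loc}$, $u_1'$ is continuous, and the equation holds a.e.

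Strict positivity then follows from the ODE. Suppose $u_1(y_0)=0$ with $u_1\ge0$. Then $y_0$ is a minimum, so $u_1'(y_0)=0$. Uniqueness for the linear ODE $u''=(V-\mu)u$ with bounded measurable coefficient (Carathéodory/Gronwall on absolutely continuous solutions) forces $u_1\equiv0$, contradicting $\|u_1\|_{L^2}=1$. This works even across $y=0$, where $V$ jumps.

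\textbf{Step 4: simplicity.} Let $v$ be any real eigenfunction in $\mathcal Q\cap H^2_{\rm loc}$. Use the Wronskian $W=u_1v'-u_1'v$, which is absolutely continuous with $W'=0$ a.e., so $W$ is constant. I expect showing $W\to0$ along a sequence $y_n\to\infty$ to be the main obstacle. It follows because $\int|W|\le\|u_1\|_2\|v'\|_2+\|u_1'\|_2\|v\|_2<\infty$, so $W\in L^1(\mathbb R)$; a constant in $L^1$ is zero. Hence $v=cu_1$.

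An alternative route is to argue that any minimizer has constant sign by Step 3, and that two linearly independent minimizers would allow a combination vanishing at a point, which is impossible. The Wronskian argument is cleaner, however, and also covers eigenfunctions that are not minimizers.

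Finally, identifying $\mu_\alpha$ as the principal eigenvalue is immediate: it is the bottom of the spectrum of the quadratic form, and it is attained by a positive eigenfunction.
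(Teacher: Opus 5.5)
Your proposal is correct and follows essentially the same route as the paper: a minimizing sequence, Rellich compactness on $[-R,R]$ together with the uniform tail bound $\int_{|y|>R}u_k^2\le C/R^2$ to obtain $\|u_1\|_{L^2}=1$, weak lower semicontinuity, the replacement $u_1\mapsto|u_1|$, and the Euler--Lagrange equation giving $u_1''=(V-\mu_\alpha)u_1\in L^2_{\mathrm{loc}}$. The only differences are in the last two steps. For positivity you use Carath\'eodory ODE uniqueness at a zero where $u_1=u_1'=0$, and for simplicity you use the fact that the Wronskian $W=u_1v'-u_1'v$ is constant and lies in $L^1(\mathbb R)$; the paper only cites the strong maximum principle and ``a standard orthogonality argument'' at these points. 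Both of your arguments are valid, and the Wronskian argument has the added merit of applying directly to any eigenfunction in $\mathcal Q\cap H^2_{\mathrm{loc}}(\mathbb R)$, without first checking that it is a minimizer.
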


\begin{proof}
We note that the functional has a non-negaive lower bound.
Choose a minimizing sequence $\{u_n\}\subset\mathcal Q$ such that
$\|u_n\|_{L^2(\mathbb R)}=1$ and
\[
\int_{\mathbb R}\bigl(|u_n'|^2+V(y)u_n^2\bigr)\,dy
\le \mu_\alpha(q_-,q_+)+\frac1n.
\]
Since
\[
V(y)\geq
\alpha^2\min\{q_-^2,q_+^2\}y^2
-\alpha\max\{|q_-|,|q_+|\},
\]
there exists a uniform constant $C$ such that 
\begin{equation}\label{compact}
 \int_{\mathbb R}  |u_n'|^2+y^2|u_n|^2\,dy\le C.
\end{equation}
Then we obtain that $\{u_n\}_{n=1}^{+\infty}$ has a uniform bound in $H^1(\mathbb{R})$ and $\{yu_n\}_{n=1}^{+\infty}$ has a uniform bound in $L^2(\mathbb{R})$. According to the Banach-Alaoglu Theorem, we could select a subsequence, denoted by itself for simplicity, such that 
\begin{align*}
u_n\rightharpoonup u_1,\quad
yu_n \rightharpoonup U_1.
\end{align*}
 We claim that $U_1=yu_1$. In fact, for any $f\in C_c^{\infty}(\mathbb R)$, we have 
$$
\int_{\mathbb R}yu_nf\,dy\to\int_{\mathbb R}U_1f\,dy,\quad n\to+\infty,
$$
due to the fact $yu_n \rightharpoonup U_1$. Since $u_n \rightharpoonup u_1$ and $yf\in C_c^{\infty}(\mathbb R)$  we have 
$$
\int_{\mathbb R}u_nyf\,dy\to\int_{\mathbb R}u_1yf\,dy,\quad n\to+\infty.
$$
The uniqueness of the weak limit gives that $U_1=yu_1\in L^2(\mathbb R)$ alomst everywhere. 

Since $yu_{n_j}\rightharpoonup yu_1$ weakly in $L^2(\mathbb R)$, the weak lower semicontinuity of the $L^2$-norm yields
\[
\|yu_1\|_{L^2(\mathbb R)}^2
\le
\liminf_{j\to\infty}
\|yu_{n_j}\|_{L^2(\mathbb R)}^2
\le C.
\]
Consequently, for every \(R>0\),
\[
\int_{\{|y|>R\}} u_1^2\,dy
\leq
\frac{1}{R^2}
\int_{\{|y|>R\}} y^2u_1^2\,dy
\leq
\frac{C}{R^2}.
\]

For any $R>0$, since $H^1(-R,R)\subset\subset L^2(-R,R)$,  by  a diagonal argument, after passing to a subsequence, still
denoted by  $\{n_j\}_{j=1}^{+\infty}$ ($n_j\to+\infty$, as $j\to+\infty$)  such that 
$$
u_{n_j}\to u_1 \text{ strongly in }L^2(-R,R).
$$

Due to \eqref{compact}, there exists a positive constant $C$ such that 
$$
R^2\int_{\{|y|>R\}}u_{n_j}^2\,dy\le\int_{\{|y|>R\}}y^2u_{n_j}^2\,dy\le \int_{\mathbb R}y^2u_{n_j}^2\,dy\le C,
$$
which means
\begin{align}
\int_{\{|y|>R\}}u_{n_j}^2\,dy\le \frac{C}{R^2}.
\end{align}

Then we claim that 
$$u_{n_j}\to u_1,\text{ strongly in }L^2(\mathbb{R}).$$
In fact,  for any $\epsilon>0$, choose $R>\frac{\sqrt{C}}{\epsilon}$, there exists $j(\epsilon)$ such that 
\begin{equation}\label{e1}
    \|u_{n_j}- u_1\|_{L^2(-R,R)}<\epsilon, \quad j\ge j(\epsilon),
\end{equation}
and 
\begin{equation}\label{e2}
    \|u_{n_j}\|_{L^2(|y|>R)}\le \frac{\sqrt{C}}{R}\le \epsilon,\quad \|u_{1}\|_{L^2(|y|>R)}\le \frac{\sqrt{C}}{R}\le \epsilon.
\end{equation}
Combining the estimates \eqref{e1} and \eqref{e2}, we have
\begin{align*}
    \|u_{n_j}- u_1\|_{L^2(\mathbb{R})}&\le \|u_{n_j}- u_1\|_{L^2(-R,R)}+\|u_{n_j}\|_{L^2(|y|>R)}+\|u_{1}\|_{L^2(|y|>R)}\le 3\epsilon,\ j\ge 
j(\epsilon),
\end{align*}
and $u_{n_j}\to u_1\text{ strongly in }L^2(\mathbb{R})$ holds true  and then $\|u_1\|_{L^2(\mathbb{R})}=1$.

Therefore, by the
  weakly lower semicontinuous of the $L^2$ norm and the strongly convergence in $L^2$, we have
\begin{align*}
\mu_\alpha(q_-,q_+)=\liminf_{n\to+\infty}\int_{\mathbb R}
\left(
|u_{n_j}'|^2+V(y)u_{n_j}^2
\right)\,dy\ge
\int_{\mathbb R}
\left(
|u_1'|^2+V(y)u_1^2
\right)\,dy\ge \mu_\alpha(q_-,q_+),
\end{align*}
and $\mu_\alpha(q_-,q_+)$ is attained by choosing the function $u_1$.

We next show that \(u_1\in H^1(\mathbb{R})\) satisfies the corresponding eigenvalue
problem in the weak sense. Let
\[
J(u)
:=
\frac{
\displaystyle
\int_{\mathbb R}
\left(
|u'|^2+V(y)u^2
\right)\,dy
}{
\displaystyle
\int_{\mathbb R}u^2\,dy
},
\qquad u\in\mathcal Q\setminus\{0\},
\]
and, for any \(\psi\in C_c^\infty(\mathbb R)\), set
$
h(t):=J(u_1+t\psi)
$.
Since \(u_1\) is a minimizer of \(J\), we have \(h'(0)=0\).
A direct calculation gives
\[
\begin{aligned}
0
&=
h'(0)\int_{\mathbb R}u_1^2\,dy\\
&=
2\int_{\mathbb R}
\left[
u_1'\psi'
+
\bigl(V(y)-\mu_\alpha(q_-,q_+)\bigr)u_1\psi
\right]\,dy\\
&=
2\left\langle
-u_1''
+
\bigl(V-\mu_\alpha(q_-,q_+)\bigr)u_1,
\psi
\right\rangle_{\mathcal D'(\mathbb R),\mathcal D(\mathbb R)} .
\end{aligned}
\]
Since this equality holds for every
\(\psi\in C_c^\infty(\mathbb R)\) in the distribution sense, it follows that 
\[
-u_1''+V(y)u_1
=
\mu_\alpha(q_-,q_+)u_1
\qquad\text{in }\mathcal D'(\mathbb R).
\]

Since \(V\in L^\infty_{\mathrm{loc}}(\mathbb R)\) and
\(u_1\in L^2_{\mathrm{loc}}(\mathbb R)\), we have
\[
\bigl(V-\mu_\alpha(q_-,q_+)\bigr)u_1
\in L^2_{\mathrm{loc}}(\mathbb R).
\]
Therefore, the above distributional equation implies that
\[
u_1''
=
\bigl(V-\mu_\alpha(q_-,q_+)\bigr)u_1
\in L^2_{\mathrm{loc}}(\mathbb R),
\]
and hence
\[
u_1\in H^2_{\mathrm{loc}}(\mathbb R).
\]
By using the one-dimensional Sobolev embedding Theorem, 
the functions \(u_1\) and \(u_1'\) are continuous on
\(\mathbb R\).

Since \(J(|u_1|)=J(u_1)\), we may assume that \(u_1\ge0\). The strong maximum principle then implies that \(u_1>0\) on \(\mathbb R\).
A standard orthogonality argument shows that
\(\mu_\alpha(q_-,q_+)\) is simple; hence  $u_1$ 
 is unique up to multiplication by a nonzero constant.
\end{proof}


\bibliographystyle{abbrv}
\bibliography{bib}
\end{document}